\DeclareSymbolFont{EulerExtension}{U}{euex}{m}{n}
\DeclareMathSymbol{\euintop}{\mathop} {EulerExtension}{"52}
\DeclareMathSymbol{\euointop}{\mathop} {EulerExtension}{"48}
\def \id{\operatorname{id}}
\def \Ker{\operatorname{Ker}}
\def \ord{\operatorname{ord}}
\def \k{\mathbbm{k}}
\def \dim{\operatorname{dim}}
\def \gr{\operatorname{gr}}
\def \Hom{\operatorname{Hom}}
\def \ord{\operatorname{ord}}
\def \Ker{\operatorname{Ker}}
\def \End{\operatorname{End}}
\def \qexp{\operatorname{qexp}}
\def \lcm{\operatorname{lcm}}
\numberwithin{equation}{section}
\newtheorem{theorem}{Theorem}[section]
\newtheorem{lemma}[theorem]{Lemma}
\newtheorem{proposition}[theorem]{Proposition}
\newtheorem{corollary}[theorem]{Corollary}
\newtheorem{definition}[theorem]{Definition}
\newtheorem{remark}[theorem]{Remark}
\newtheorem{notation}[theorem]{Notation}
\begin{document}
\title{On the antipode of Hopf algebras with the dual Chevalley property}
\thanks{$^\dag$Supported by NSFC 11722016.}

\subjclass[2010]{16T05 (primary), 16T15 (secondary)}
\keywords{Antipode, Dual Chevalley property, Exponent, Quasi-Exponent}

\author{Kangqiao Li and Gongxiang Liu}
\address{Department of Mathematics, Nanjing University, Nanjing 210093, China}
\email{kqli@nju.edu.cn}
\email{gxliu@nju.edu.cn}

\date{}
\maketitle

\begin{abstract}
In this paper, we study the antipode of a finite-dimensional Hopf algebra $H$ with the dual Chevalley property and obtain an annihilation polynomial for its antipode $S$.
The annihilation polynomial is determined by the exponent $N$ of the coradical and the Loewy length. In particular, the order of $S^2$ divides $N$ in characteristic $0$. Moreover, we get two characterizations of the quasi-exponent.
\end{abstract}

\section{Introduction}

Let $H$ be a finite-dimensional Hopf algebra over a field $\k$ with the antipode $S$, and denote the composition order of $S^2$ by $\ord(S^2)$. The order or annihilation polynomials of $S^2$ has been studied for more than 40 years. The first most general result is given by Radford \cite{Radford 1976} in 1976, which states that $\ord(S^2)$ is always finite. Then the people want to find an explicit bound for $\ord(S^2)$. Actually, the people made the progress at least in the following two different cases for $H$: semisimple case and pointed case.

As for the first case when $H$ is semisimple, Kaplansky conjectured in \cite{Kaplansky 1975} that semisimple Hopf algebras are all involutory (that is, $\ord(S^2)=1$), which is well-known as the Kaplansky's fifth conjecture and still open in small positive characteristic. Moreover, when $H$ is cosemisimple in addition, a positive answer was given by Etingof and Gelaki \cite{E-G 1998}. The other case when $H$ is pointed was once studied by Taft and Wilson \cite{T-W 1974} in 1974. They obtained the following annihilation polynomial:
\begin{equation}\label{28}
(S^{2N}-\id)^{L-1}=0,
\end{equation}
where $N$ is the exponent of the coradical and $L$ denotes the Loewy length. This formula implies directly that $\ord(S^2)\mid N$ in characteristic $0$ (\cite{E-G 2002}) and $\ord(S^2)\mid Np^M$ in characteristic $p>0$ for some positive integer $M$ (\cite{T-W 1974}).

Our first goal in this paper is to show that Formula (\ref{28}) holds as well when $H$ has the dual Chevalley property, which provides the desired annihilator polynomial for the antipode. The main tools for the proof are so-called coradical orthonormal idempotents in $H^\ast$ and (multiplicative and primitive) matrices over $H$. Multiplicative and primitive matrices could be regarded as analogous concepts to grouplike and primitive elements, respectively. Such concepts are studied in \cite{Li-Zhu 2019}, and applied to study properties of non-pointed coalgebras or Hopf algebras. For example, if the coradical filtration is denoted by $\{H_n\}_{n\geq 0}$, one could decompose an arbitrary element in $H_1$ into a sum of entries coming from multiplicative and primitive matrices.

The idea for setting the annihilator polynomial can be described as follows. With the help of our developed tools, we find that the action of $S^2$ on primitive matrices behaves similar to a conjugate action by a multiplicative matrix at first. Then along the similar line as the pointed case, we show that the action of $S^{2N}$ on $H_1$ is exactly the identity map. At last, by a lemma in \cite{T-W 1974} which gives us an inductive way from $H_n$ to $H_{n+1}$, where $\{H_n\}_{n\geq 0}$ is the coradical filtration of $H$, we get our desired formula.

Afterwards we provide some other results followed. The first one is that the order of $S^2$ divides $\exp(H_0)$ in characteristic $0$. This implies our second application that the quasi-exponent of $H$ is exactly $\exp(H_0)$. Finally, by specific calculations, we show that how the quasi-exponent of the semi-direct product Hopf algebra $H\rtimes \k \langle S^2\rangle$ is determined by that of $H$.

The organization of this paper is as follows: In Section \ref{b}, we recall definitions and properties of tools we need, including the exponent, coradical orthonormal idempotents, as well as multiplicative and primitive matrices mentioned above. Our first result on the annihilator polynomial is stated and proved in Section \ref{c}. A direct corollary that $\ord(S^2)\mid\exp(H_0)$ in characteristic $0$ is also obtained in this section. Some properties on quasi-exponents are then given as applications in the last section.

\section{Preliminaries}\label{b}

We recall the most needed knowledge, including the definitions and some properties of multiplicative and primitive matrices, in this section. Let $\k$ be a field throughout this paper, and the tensor product over $\k$ is always denoted simply by $\otimes$. For a coalgebra $(H,\Delta,\varepsilon)$ over a field $\k$, Sweedler notation $\Delta(h)=\sum h_{(1)}\otimes h_{(2)}$ for $h\in H$ is always used.

\subsection{Dual Chevalley Property}

A finite-dimensional Hopf algebra $H$ is said to have the \textit{dual Chevalley property}, if its coradical $H_0$ is a Hopf subalgebra (or equivalently, $H_0$ is a subalgebra of $H$). A well-known result about the dual Chevalley property is the following lemma (see e.g. \cite[Lemma 5.2.8]{Montgomery 1993}).

\begin{lemma}\label{14}
Let $H$ be a Hopf algebra $H$ with the coradical filtration $\{H_n\}_{n\geq 0}$. Then the followings are equivalent:
\begin{itemize}
  \item[(1)] $H_0$ is a Hopf subalgebra of $H$;
  \item[(2)] $\{H_n\}_{n\geq 0}$ is a Hopf algebra filtration.
\end{itemize}
\end{lemma}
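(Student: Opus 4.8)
My plan is to dispose of $(2)\Rightarrow(1)$ at once, reduce $(1)\Rightarrow(2)$ to a single multiplicativity statement for the coradical filtration, and prove that statement by passing to the dual $H^{\ast}$. Indeed $(2)\Rightarrow(1)$ is immediate: an algebra filtration gives $H_0H_0\subseteq H_0$, and $\unit\in H_0$ because $\k\unit$ is a simple subcoalgebra, so $H_0$ is a subalgebra — hence a Hopf subalgebra, since $S(H_0)\subseteq H_0$ always holds. For the converse, assume $(1)$. Among the conditions defining a Hopf algebra filtration all but one hold for free: $\bigcup_n H_n=H$ is clear as $H$ is finite-dimensional; $\Delta(H_n)\subseteq\sum_{i+j=n}H_i\otimes H_j$ is the standard fact that the coradical filtration is a coalgebra filtration; and $S(H_n)\subseteq H_n$ holds because the bijective antipode of a finite-dimensional Hopf algebra gives a coalgebra isomorphism $H\cong H^{\mathrm{cop}}$, while $H^{\mathrm{cop}}$ has the same coradical filtration as $H$ (same simple subcoalgebras, and $V\wedge^{\mathrm{cop}}W=W\wedge V$). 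So the entire content reduces to the multiplicativity
\begin{equation*}
H_mH_n\subseteq H_{m+n}\qquad(m,n\ge 0).
\end{equation*}

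To prove this I would dualize. Put $J:=H_0^{\perp}\subseteq H^{\ast}$ (this is the Jacobson radical of $H^{\ast}$). Since $H_0$ is a subcoalgebra, $J$ is a two-sided ideal of $H^{\ast}$; since $H_0$ is a subalgebra, the dual comultiplication satisfies
\begin{equation*}
\Delta_{H^{\ast}}(J)\subseteq J\otimes H^{\ast}+H^{\ast}\otimes J,
\end{equation*}
which is merely the orthogonal-complement translation of $H_0H_0\subseteq H_0$ (via $(J\otimes H^{\ast}+H^{\ast}\otimes J)^{\perp}=H_0\otimes H_0$ inside $H\otimes H$). Combining the standard identity $(V\wedge W)^{\perp}=V^{\perp}W^{\perp}$ for subspaces of a finite-dimensional coalgebra with the description $H_n=H_0\wedge\cdots\wedge H_0$ ($n+1$ factors) yields the dictionary $H_n^{\perp}=J^{\,n+1}$: the coradical filtration of $H$ is dual to the $J$-adic filtration of $H^{\ast}$.

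The main point is that $\Delta_{H^{\ast}}$ is an algebra homomorphism, so the inclusion for $J$ bootstraps to all powers:
\begin{equation*}
\Delta_{H^{\ast}}\!\left(J^{\,N}\right)\subseteq\left(J\otimes H^{\ast}+H^{\ast}\otimes J\right)^{N}\subseteq\sum_{p=0}^{N}J^{\,p}\otimes J^{\,N-p},
\end{equation*}
the last inclusion because $J$ is a two-sided ideal. Taking $N=m+n+1$, every summand $J^{\,p}\otimes J^{\,N-p}$ lies in $J^{\,m+1}\otimes H^{\ast}+H^{\ast}\otimes J^{\,n+1}=H_m^{\perp}\otimes H^{\ast}+H^{\ast}\otimes H_n^{\perp}$ (if $p\ge m+1$ the first tensor factor already sits inside $J^{\,m+1}$; otherwise $N-p\ge n+1$ and the second sits inside $J^{\,n+1}$). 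Pairing with $H_m\otimes H_n$, this gives $f(hk)=0$ for all $f\in J^{\,m+n+1}=H_{m+n}^{\perp}$, $h\in H_m$ and $k\in H_n$; hence $H_mH_n\subseteq\left(J^{\,m+n+1}\right)^{\perp}=H_{m+n}$.

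The step I expect to be the genuine obstacle is exactly this multiplicativity, because the direct coalgebraic route does not close. If one inducts on $m+n$ and expands $\Delta(hk)=\Delta(h)\Delta(k)$ using $\Delta(h)\in H_{m-1}\otimes H+H\otimes H_0$ (from $h\in H_m=H_{m-1}\wedge H_0$) together with a similar description of $\Delta(k)$, the product in $H\otimes H$ produces cross terms lying in $H\otimes H$ with no control on either tensor factor, so one cannot deduce $hk\in H_{m+n}$. Dualizing avoids this precisely because on the $H^{\ast}$-side the relevant filtration is the $J$-adic one and $\Delta_{H^{\ast}}$ is multiplicative, so a single seed inclusion for $J$ upgrades automatically to every $J^{\,N}$. (For a Hopf algebra not assumed finite-dimensional the same argument runs inside the finite dual $H^{\circ}$; cf. \cite{Montgomery 1993}.)
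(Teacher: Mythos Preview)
The paper does not give its own proof of this lemma; it records it as a well-known result and cites \cite[Lemma 5.2.8]{Montgomery 1993}. Your argument is correct and is one of the standard proofs: dualize, identify the coradical filtration of $H$ with the $J$-adic filtration of $H^\ast$ for $J=H_0^\perp$, and use that $\Delta_{H^\ast}$ is an algebra homomorphism so that the seed inclusion $\Delta_{H^\ast}(J)\subseteq J\otimes H^\ast+H^\ast\otimes J$ propagates to all powers of $J$.

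Two minor remarks. First, the lemma as stated does not assume $H$ finite-dimensional, but you invoke that hypothesis at several points (bijectivity of $S$ for $S(H_n)\subseteq H_n$, and the identification $(H\otimes H)^\ast\cong H^\ast\otimes H^\ast$ underlying the orthogonality computations); your closing parenthetical about $H^\circ$ gestures at the general case, and in any event the paper only applies the lemma to finite-dimensional $H$, so this is harmless here. Second, $\bigcup_n H_n=H$ holds for every coalgebra, not just finite-dimensional ones, so that step does not require the extra hypothesis.
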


\subsection{Exponent}

Let $(H,m,u,\Delta,\varepsilon)$ be a Hopf algebra with antipode $S$ over a field $\k$. For convenience, we define following $\k$-linear maps for any positive integer $n$:
\begin{eqnarray*}
m_n : & H^{\otimes n}\rightarrow H, & h_1\otimes h_2\otimes \cdots \otimes h_n \mapsto h_1 h_2\cdots h_n ;\\
\Delta_n : & H\rightarrow H^{\otimes n}, & h\mapsto \sum h_{(1)}\otimes h_{(2)}\otimes \cdots \otimes h_{(n)}.
\end{eqnarray*}
When $S$ is bijective, the notion of the \textit{exponent} of $H$ introduced in \cite{E-G 1999} by Etingof and Gelaki is defined as
$$\exp(H):=\min\{n\geq 1\mid m_n\circ (\id\otimes S^{-2}\otimes\cdots\otimes S^{-2n+2})\circ \Delta_n=u\circ\varepsilon \}$$
with convention $\min \varnothing=\infty$. One of their most crucial ways to study the exponent is the following identification \cite[Theorem 2.5(2)]{E-G 1999}:
$$\exp(H) ~\text{equals to the multiplication order of}~u_{D(H)},$$
where $u_{D(H)}$ is the Drinfeld element of the Drinfeld double $D(H)$ (\cite{Drinfeld 1986}). We remark that there is also another notion of ``exponent" introduced by Kashina \cite{Kashina 1999,Kashina 2000} (see also Remark \ref{r4.10}).

It is known that $D(H)$ is also a Hopf algebra, whose antipode is denoted by $S_{D(H)}$. Moreover, $S_{D(H)}{}^2$ is in fact an inner automorphism determined by $u_{D(H)}$ on $D(H)$. Thus $S_{D(H)}{}^{2\exp(H)}$ becomes the identity map on $D(H)$, as long as $\exp(H)<\infty$. Restricting this map onto the Hopf subalgebra $H\cong \varepsilon\bowtie H\subseteq D(H)$, we obtain the following fact immediately:

\begin{corollary}\label{31}
Let $H$ be a finite-dimensional Hopf algebra with antipode $S$. If $\exp(H)<\infty$, then $S^{2\exp(H)}=\id$ \emph{(}the identity map on $H$\emph{)}.
\end{corollary}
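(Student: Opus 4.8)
The plan is to derive Corollary~\ref{31} directly from the two facts recalled just above its statement, namely the Etingof--Gelaki identification $\exp(H)=\ord(u_{D(H)})$ and the fact that $S_{D(H)}{}^2$ is the inner automorphism of $D(H)$ implemented by the Drinfeld element $u_{D(H)}$. Concretely, since $S$ is the antipode of a finite-dimensional Hopf algebra it is automatically bijective (Radford), so $\exp(H)$ is defined; assume moreover $\exp(H)=:N<\infty$. Then $u_{D(H)}^{N}$ is central in $D(H)$ — indeed it equals the central grouplike element $g_{D(H)}$ raised to a power, or more simply one observes that $u_{D(H)}$ has finite multiplicative order $N$ and $S_{D(H)}{}^2(x)=u_{D(H)}\,x\,u_{D(H)}^{-1}$ for all $x\in D(H)$, hence $S_{D(H)}{}^{2N}(x)=u_{D(H)}^{N}\,x\,u_{D(H)}^{-N}=x$, so $S_{D(H)}{}^{2N}=\id$ on $D(H)$.

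Next I would invoke the standard Hopf algebra embedding $H\hookrightarrow D(H)$, $h\mapsto \varepsilon\bowtie h$, which is a morphism of Hopf algebras; in particular it intertwines the antipodes, so $S_{D(H)}\circ\iota=\iota\circ S$ where $\iota$ denotes this embedding. Iterating gives $S_{D(H)}{}^{2N}\circ\iota=\iota\circ S^{2N}$. Combining with $S_{D(H)}{}^{2N}=\id$ from the previous step yields $\iota\circ S^{2N}=\iota$, and since $\iota$ is injective we conclude $S^{2N}=\id$ on $H$, i.e.\ $S^{2\exp(H)}=\id$, which is exactly the claim.

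The only genuinely nontrivial inputs here are the two cited results of Etingof--Gelaki (the identification of $\exp(H)$ with $\ord(u_{D(H)})$ and the formula $S_{D(H)}{}^2=\mathrm{ad}(u_{D(H)})$), both of which the excerpt already states and which we are entitled to use; everything else is formal. I do not anticipate a serious obstacle — the proof is essentially a one-line restriction argument — but the point deserving a moment of care is confirming that the map $h\mapsto\varepsilon\bowtie h$ really is a Hopf subalgebra inclusion (so that it commutes with the antipodes and is injective), which is part of the standard construction of the Drinfeld double and is exactly what the excerpt alludes to when it writes $H\cong\varepsilon\bowtie H\subseteq D(H)$.
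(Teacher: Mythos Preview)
Your proposal is correct and follows essentially the same argument as the paper: the paper's justification is exactly the paragraph preceding the corollary, which uses $\exp(H)=\ord(u_{D(H)})$ together with $S_{D(H)}^{2}=\mathrm{ad}(u_{D(H)})$ to get $S_{D(H)}^{2\exp(H)}=\id$, and then restricts to the Hopf subalgebra $H\cong\varepsilon\bowtie H\subseteq D(H)$. Your write-up is slightly more explicit about why the restriction works (the embedding is a Hopf algebra map, hence intertwines antipodes), which is a welcome clarification but not a different approach.
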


There are two theorems \cite[Theorem 4.3]{E-G 1999} and \cite[Theorem 4.10]{E-G 1999} describing the finiteness of the exponent. We state them below:
\begin{lemma}
Let $H$ be a finite-dimensional Hopf algebra over $\k$.
\begin{itemize}
  \item[(1)] If $H$ is semisimple and cosemisimple, then $\exp(H)$ is finite and divides $\dim(H)^3$;
  \item[(2)] If ${\rm char}~\k>0$, then $\exp(H)<\infty$.
\end{itemize}
\end{lemma}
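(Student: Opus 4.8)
The plan is to deduce both statements from the identification recalled above: $\exp(H)$ equals the multiplicative order of the Drinfeld element $u:=u_{D(H)}$ of the Drinfeld double $D(H)$. Thus the whole problem becomes one of bounding $\ord(u)$ inside the quasitriangular Hopf algebra $D(H)$, and the two structural inputs I would use throughout are Drinfeld's relations $S_{D(H)}{}^2(a)=uau^{-1}$ and $\Delta(u)=(R_{21}R)^{-1}(u\otimes u)$, where $R$ is the canonical $R$-matrix of $D(H)$ and $R_{21}R$ its monodromy element; recall also that $u\otimes u$ commutes with $R_{21}R$.

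For part (1), I would first note that if $H$ is semisimple and cosemisimple then so is $H^\ast$, and hence $D(H)$ is semisimple and cosemisimple (for the Drinfeld double both conditions are equivalent to the semisimplicity of $H$ and of $H^\ast$). By \cite{E-G 1998} --- the Larson--Radford theorem when ${\rm char}~\k=0$ --- a semisimple and cosemisimple Hopf algebra is involutory, so $S_{D(H)}{}^2=\id$; equivalently $u$ is central in $D(H)$ and therefore acts by a scalar on each simple $D(H)$-module. The remaining, and principal, task is quantitative. Here I would use that $\Rep(D(H))$ is the Drinfeld center of $\Rep(H)$, hence a non-degenerate braided fusion category, and that the scalars by which $u$ acts on its simple objects are, up to the canonical balancing, the twists $\theta_i$ of those objects; a Vafa-type argument shows that the $\theta_i$ are roots of unity, and a counting argument on the resulting modular data --- combined with $\dim(D(H))=\dim(H)^2$ --- yields $\ord(u)=\lcm_i\ord(\theta_i)\mid\dim(H)^3$. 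The delicate point is exactly this last divisibility: the finiteness of $\ord(u)$ is comparatively soft, whereas pinning down the exponent $3$ requires the explicit analysis of the Drinfeld element carried out in \cite{E-G 1999}.

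For part (2), where ${\rm char}~\k=p>0$, the double $D(H)$ need not be semisimple and $u$ need not be central, so the argument must be genuinely different. Since $S_{D(H)}{}^4$ is conjugation by $u^2$ and, by Radford's $S^4$-formula, is also conjugation by the distinguished grouplike element $\mathbf{g}$ of $D(H)$, the element $z:=u^2\mathbf{g}^{-1}$ is central; and as $\mathbf{g}$ is grouplike in a finite-dimensional Hopf algebra it has finite order. Combining centrality of $z$ with $\Delta(u)=(R_{21}R)^{-1}(u\otimes u)$ one computes $\Delta(z)=(R_{21}R)^{-2}(z\otimes z)$, whence $\Delta(z^k)=(R_{21}R)^{-2k}(z^k\otimes z^k)$. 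Consequently, as soon as $R_{21}R$ has finite order, some power $z^k$ is grouplike, hence of finite order, which forces $\ord(z)$ and therefore $\ord(u)=\exp(H)$ to be finite. The main obstacle is precisely to prove $\ord(R_{21}R)<\infty$ in characteristic $p$: one must show that $R_{21}R$ is quasi-unipotent --- its eigenvalues are roots of unity, which is the subtle step (controlled by the finiteness of $\ord(S_{D(H)}{}^2)$ from Radford's theorem) --- after which, since ${\rm char}~\k=p$, its unipotent part is annihilated by a sufficiently high $p$-th power. This positive-characteristic bookkeeping is the content of \cite[Theorem 4.10]{E-G 1999}.
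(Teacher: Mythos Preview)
The paper does not prove this lemma at all: it is stated as a quotation of \cite[Theorem 4.3]{E-G 1999} and \cite[Theorem 4.10]{E-G 1999}, with no argument given. So there is no ``paper's own proof'' to compare against; your sketch is an attempt to reconstruct the original Etingof--Gelaki arguments, and in broad outline it does follow them.

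A couple of remarks on the sketch itself. For part (1), your reduction to the centrality of $u_{D(H)}$ via involutivity of $D(H)$ is correct, and you rightly identify the sharp divisibility $\ord(u)\mid\dim(H)^3$ as the nontrivial input; this is indeed what the Vafa-type analysis in \cite{E-G 1999} supplies, and your sketch essentially defers to that reference for it. For part (2), the computation $\Delta(z^k)=(R_{21}R)^{-2k}(z^k\otimes z^k)$ with $z=u^2\mathbf{g}^{-1}$ central is clean, and the reduction to finiteness of $\ord(R_{21}R)$ is a valid strategy. The step you flag as ``subtle'' --- that $R_{21}R$ is quasi-unipotent in positive characteristic --- genuinely is the crux, and your parenthetical hint (``controlled by the finiteness of $\ord(S_{D(H)}{}^2)$'') is too vague to count as an argument: the connection between $\ord(S^2)$ and the spectrum of $R_{21}R$ is not immediate, and this is precisely where the actual work in \cite[Theorem 4.10]{E-G 1999} lies. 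Since you explicitly defer to that reference at the end, the sketch is honest about its gaps, but it should be read as an annotated citation rather than a self-contained proof.
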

As a conclusion of the theorems above, it is easy to check that the coradical of a finite-dimensional Hopf algebra with the dual Chevalley property always has finite exponent:

\begin{corollary}\label{1}
Let $H$ be a finite-dimensional Hopf algebra with the dual Chevalley property over $\k$. Then $\exp(H_0)<\infty$.
\end{corollary}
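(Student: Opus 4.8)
The goal is Corollary~\ref{1}: if $H$ is a finite-dimensional Hopf algebra with the dual Chevalley property, then $\exp(H_0)<\infty$. The plan is to split into the two characteristic cases and reduce each to the preceding Lemma. First I would observe that, since $H$ has the dual Chevalley property, Lemma~\ref{14} tells us $H_0$ is itself a Hopf subalgebra of $H$, hence $H_0$ is a finite-dimensional Hopf algebra in its own right; it therefore suffices to show $\exp(H_0)<\infty$ for this \emph{cosemisimple} Hopf algebra $H_0$ (it is cosemisimple because $H_0$, being the coradical, equals its own coradical, i.e. $(H_0)_0=H_0$).

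In characteristic $p>0$, the statement is immediate from part~(2) of the preceding Lemma applied to the finite-dimensional Hopf algebra $H_0$, so no further argument is needed there. In characteristic $0$, I would invoke the Larson--Radford theorem: a finite-dimensional cosemisimple Hopf algebra over a field of characteristic $0$ is automatically semisimple as well. Hence $H_0$ is both semisimple and cosemisimple, and part~(1) of the preceding Lemma gives $\exp(H_0)\mid \dim(H_0)^3<\infty$.

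The only genuinely delicate point — the ``main obstacle'' — is making sure the cosemisimplicity of $H_0$ is used legitimately and that one is allowed to appeal to Larson--Radford in characteristic $0$; both are standard, so this is really a bookkeeping corollary rather than a theorem with a substantive proof. One should also note that the definition of $\exp$ presupposes $S$ (for $H_0$) is bijective, which holds because $H_0$ is finite-dimensional, so the expression $\exp(H_0)$ is well-defined to begin with. Putting the two cases together yields $\exp(H_0)<\infty$ in all characteristics, which is exactly the claim.
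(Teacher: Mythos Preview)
Your proposal is correct and follows essentially the same route as the paper: split on the characteristic, apply part~(2) of the preceding lemma directly when $\operatorname{char}\k>0$, and in characteristic~$0$ use Larson--Radford to conclude the cosemisimple Hopf algebra $H_0$ is also semisimple so that part~(1) applies. Your additional remarks (that $H_0$ is a Hopf subalgebra via Lemma~\ref{14}, and that $S$ is bijective on $H_0$) are harmless clarifications the paper leaves implicit.
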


\begin{proof}
If ${\rm char}~\k>0$, then this is a direct consequence of the (2) of the above lemma. If ${\rm char}~\k=0$,  the cosemisimple Hopf algebra $H_0$ is also semisimple now by \cite[Theorem 3.3]{L-R 1988}. Then (1) of the above lemma is applied.
\end{proof}

\subsection{Coradical Orthonormal Idempotents}\label{29}

For any coalgebra $H$, its dual algebra with the convolution product is denoted by $H^\ast$. Now we refer a certain kind of family of idempotents in $H^\ast$ introduced by Radford \cite{Radford 1978}, which are called \emph{coradical orthonormal idempotents} in this paper. To introduce them, let $\mathcal{S}$ be the set of simple subcoalgebras of $H$ and the classical Kronecker delta is denoted by $\delta$.

\begin{definition}
Let $H$ be a coalgebra. A family of coradical orthonormal idempotents of $H^\ast$ is a family of non-zero elements $\{e_C\}_{C\in\mathcal{S}}$ in $H^\ast$ satisfying following conditions:
\begin{itemize}
  \item[(1)] $ e_C e_D=\delta_{C,D}e_C$ for $C,D\in \mathcal{S}$;
  \item[(2)] $\sum\limits_{C\in\mathcal{S}} e_C=\varepsilon$ on $H$ (distinguished condition);
  \item[(3)] $e_C|_D=\delta_{C,D}\varepsilon|_D$ for $C,D\in \mathcal{S}$.
\end{itemize}
\end{definition}

The existence of (a family of) coradical orthonormal idempotents in $H^\ast$ for any coalgebra $H$ is affirmed in Radford \cite[Lemma 2]{Radford 1978} or \cite[Corollary 3.5.15]{Radford 2012}, according to properties of injective comodules. It is always assumed that $\{e_C\}_{C\in\mathcal{S}}$ is a given family of coradical orthonormal idempotents in $H^\ast$ for the remaining of this paper.

We remark that when $H$ is pointed, there is another way to construct coradical orthonormal idempotents from \cite[Theorem 5.4.2]{Montgomery 1993} for example, and some convenient notations are used there. Similar notations for $\{e_C\}_{C\in\mathcal{S}}$ will be used in this paper too:
$${^C}h=h\leftharpoonup e_C,h^D=e_D\rightharpoonup h,{^C}h^D=e_D\rightharpoonup h\leftharpoonup e_C,\ ~h\in H,~C,D \in \mathcal{S},$$
where $\leftharpoonup$ and $\rightharpoonup$ are hit actions of $H^\ast$ on $H$. Specially if $C={\k} g$ is pointed, then we also denote ${}^gh:={}^{C}h$, $h^g:=h^{C}$, and  $V^{C}:=e_C\rightharpoonup V$, etc..

Some direct properties, which can be found in \cite[Proposition 2.2]{Li-Zhu 2019}, are listed as follows:

\begin{proposition}\label{3}
Let $H$ be a coalgebra. Then for all $C,D\in\mathcal{S}$, we have
\begin{itemize}
\item[(1)] ${}^C H_0{}^D=\delta_{C,D}C$;
\item[(2)] ${}^C H_1{}^D \subseteq \Delta^{-1}(C\otimes {}^C H_1{}^D + {}^C H_1{}^D \otimes D)$;
\item[(3)] ${}^C H{}^D \subseteq \Ker(\varepsilon)$ if $C\neq D$;
\item[(4)] Suppose $V\subseteq H$ is a ${\k}$-subspace. We have following direct-sum decomposition:
      \begin{itemize}
        \item[(i)] $V=\bigoplus\limits_{C\in\mathcal{S}}{}^C V$ if $V$ is a left coideal;
        \item[(ii)] $V=\bigoplus\limits_{D\in\mathcal{S}} V^D$ if $V$ is a right coideal;
        \item[(iii)] $V=\bigoplus\limits_{C,D\in\mathcal{S}} {}^C V^D$ if $V$ is a subcoalgebra.
      \end{itemize}
\end{itemize}
\end{proposition}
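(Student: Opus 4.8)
The plan is to deduce all four statements from one structural fact, which I would establish first. Through the hit actions, the coradical orthonormal idempotents $\{e_C\}_{C\in\mathcal{S}}$ act on $H$ by two commuting families of $\k$-linear operators: the left hits $L_C\colon h\mapsto{}^C h$ and the right hits $R_D\colon h\mapsto h^D$. Conditions (1) and (2) of the definition, together with the $H^\ast$-bimodule axioms, say exactly that each family consists of orthogonal idempotents with $\sum_{C\in\mathcal{S}}L_C=\sum_{D\in\mathcal{S}}R_D=\id_H$, while condition (3) together with $\Delta(E)\subseteq E\otimes E$ says that on each \emph{simple} subcoalgebra $E$ one has $L_C|_E=R_C|_E=\delta_{C,E}\,\id_E$.

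Given this, (4) is almost formal. If $V$ is a left coideal then every left hit preserves $V$ (comultiplication keeps the relevant Sweedler leg inside $V$), so $V=\bigoplus_{C\in\mathcal{S}}{}^C V$ because the $L_C$ are orthogonal idempotents summing to $\id_V$; the right-coideal case is the mirror statement through the $R_D$, and for a subcoalgebra one runs both and uses that the two families commute. Part (1) then follows by writing $H_0=\bigoplus_{E\in\mathcal{S}}E$ and applying the restriction formula termwise: ${}^C E{}^D=\delta_{C,E}\delta_{D,E}E$, so summing over $E$ leaves only $E=C=D$. Part (3) is the one line $\varepsilon({}^C h^D)=(e_Ce_D)(h)$, which is $0$ whenever $C\ne D$.

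The substance is (2), which I would reach through a \emph{linkage} lemma: $\Delta\big({}^C H{}^D\big)\subseteq\bigoplus_{E\in\mathcal{S}}{}^C H{}^E\otimes{}^E H{}^D$. For each simple $E$, consider the element $\sum e_E(h_{(2)})\,h_{(1)}\otimes h_{(3)}$ obtained by contracting the middle leg of $\Delta_3(h)$ against $e_E$; summing over $E$ recovers $\Delta(h)$ since $\sum_{C}e_C=\varepsilon$, and a routine Sweedler computation shows that when $h\in{}^C H{}^D$ each such element lies in ${}^C H{}^E\otimes{}^E H{}^D$ (the prescript $C$ and superscript $D$ of $h$ pass to the outer legs, while $e_E$ installs a matching $E$ on the two inner ends). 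Granting the lemma, let $x\in{}^C H_1{}^D$. Then $\Delta(x)$ lies simultaneously in $\bigoplus_E{}^C H{}^E\otimes{}^E H{}^D$ and, since the coradical filtration is a coalgebra filtration, in $H_0\otimes H_1+H_1\otimes H_0$. Both are graded for the Peirce decomposition of $H\otimes H$ induced by the idempotents, so the intersection is taken componentwise; in the component ${}^C H{}^E\otimes{}^E H{}^D$, part (1) forces an $H_0$-valued first leg into ${}^C H_0{}^E=\delta_{C,E}C$ — so $E=C$ and the second leg into $H_1\cap{}^C H{}^D={}^C H_1{}^D$ — and dually an $H_0$-valued second leg forces $E=D$ with first leg in ${}^C H_1{}^D$. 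Collecting the surviving components gives $\Delta(x)\in C\otimes{}^C H_1{}^D+{}^C H_1{}^D\otimes D$, which is (2).

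The main obstacle is (2): neither the linkage lemma nor the final intersection is deep, but both are bookkeeping-heavy, and the genuinely delicate point is checking that intersecting the two subspaces of $H\otimes H$ really respects the Peirce grading — this is exactly what licenses reading off the constraints ``$E=C$'' and ``$E=D$'' one graded component at a time.
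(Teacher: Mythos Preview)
The paper does not actually prove this proposition; it simply records it as a list of ``direct properties, which can be found in \cite[Proposition 2.2]{Li-Zhu 2019}''. So there is no in-paper argument to compare against, and the relevant question is only whether your sketch is sound. It is.

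Your organization---first extracting from the three defining conditions that the $L_C$ and $R_D$ form two commuting families of orthogonal idempotents summing to $\id_H$, each acting as $\delta_{C,E}\id_E$ on a simple $E$---is exactly the right structural fact, and (1), (3), (4) drop out of it the way you say. For (2), your linkage lemma is correct and follows cleanly from the identity $(R_E\otimes\id)\circ\Delta=(\id\otimes L_E)\circ\Delta$ together with $\Delta\circ L_C=(L_C\otimes\id)\circ\Delta$ and its mirror. The point you flagged as ``genuinely delicate''---that the intersection of $H_0\otimes H_1+H_1\otimes H_0$ with $\bigoplus_E {}^C H^E\otimes {}^E H^D$ can be computed Peirce-componentwise---is not a gap: both subspaces are stable under every Peirce projector $L_A R_B\otimes L_E R_F$ (each summand $H_0\otimes H_1$, $H_1\otimes H_0$ is, since $H_0$ and $H_1$ are subcoalgebras and hence Peirce-graded by your (4)(iii)), and an intersection of projector-stable subspaces is again projector-stable, hence equal to the direct sum of its components. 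After that, your use of (1) to kill all $E\neq C,D$ and land in $C\otimes{}^C H_1{}^D+{}^C H_1{}^D\otimes D$ is correct.
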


\subsection{Multiplicative and Primitive Matrices}
 The content of this subsection could be found in \cite[Section 3]{Li-Zhu 2019}. For positive integers $r$ and $s$, we use $\mathcal{M}_{r\times s}(V)$ to denote the set of all $r\times s$ matrices over a vector space $V$. If $r=s$, we write $\mathcal{M}_r(V)=\mathcal{M}_{r\times r}(V)$.

\begin{notation}
Let $V,W$ be ${\k}$-vector spaces.
\begin{itemize}
  \item[(1)] Define a bilinear map
        \begin{eqnarray*}
        \widetilde{\otimes}: &\mathcal{M}_{r\times s}(V)\otimes\mathcal{M}_{s\times t}(W)\rightarrow \mathcal{M}_{r\times t}(V\otimes W), \\
        & (v_{ij})\otimes(w_{kl}) \mapsto \left( \sum\limits_{k=1}^s v_{ik} \otimes w_{kj} \right).
        \end{eqnarray*}
        Note that $\widetilde{\otimes}\circ(\widetilde{\otimes}\otimes \id)=\widetilde{\otimes}\circ(\id\otimes\widetilde{\otimes})$ holds on
        $\mathcal{M}_{r\times s}(U)\otimes\mathcal{M}_{s\times t}(V)\otimes\mathcal{M}_{t\times u}(W)$.
  \item[(2)] Let $f\in  {\Hom}_{{\k}}(V,W)$, we always use the same symbol $f$ to denote the linear map
        $f:\mathcal{M}_{r\times s}(V)\rightarrow \mathcal{M}_{r\times s}(W),(v_{ij})\mapsto \left(  f(v_{ij}) \right)$.
\end{itemize}
\end{notation}

\begin{remark}
\emph{With such notations, it is direct that when $(H,m,u)$ is an algebra, the linear map $m\circ\widetilde{\otimes}$ on $\mathcal{M}_{r\times s}(H)\otimes \mathcal{M}_{s\times t}(H)$ is exactly the matrix multiplication $\mathcal{M}_{r\times s}(H)\otimes \mathcal{M}_{s\times t}(H)\to \mathcal{M}_{r\times t}(H)$.}
\end{remark}

A ``multiplicative matrix'' over a coalgebra, once introduced in \cite[Section 2.6]{Manin 1988} for quantum group constructions, has similar properties to a grouplike element.

\begin{definition}
Let $(H,\Delta,\varepsilon)$ be a coalgebra and $r$ be a positive integer. Let $I_r$ denote the unit matrix of order $r$ over ${\k}$.
\begin{itemize}
  \item[(1)] A matrix $\mathcal{G}\in \mathcal{M}_r(H)$ is called a multiplicative matrix over $H$ if $\Delta(\mathcal{G})=\mathcal{G}~\widetilde{\otimes}~\mathcal{G}$ and $\varepsilon(\mathcal{G})=I_r$.
  \item[(2)] For any $C\in \mathcal{S}$, a multiplicative matrix $\mathcal{C}$ is called a basic multiplicative matrix of $C$, if all the entries of $\mathcal{C}$ form a linear basis of $C$.
\end{itemize}
\end{definition}

\begin{remark}
\emph{It is well-known that every simple coalgebra over an algebraically closed field has a basic multiplicative matrix.}
\end{remark}

Moreover, if $H$ is a bialgebra, then the $n$th Sweedler power $[n]:h\mapsto\sum h_{(1)}h_{(2)}\cdots h_{(n)}$ of a multiplicative matrix $\mathcal{G}=(g_{ij})$ over $H$ equals to the $n$th (multiplication) power of $\mathcal{G}$ (see \cite[Corollary 3]{K-S-Z 2006} or \cite[Proposition 4.2]{Li-Zhu 2019}).

\begin{lemma}
Let $(H,m,u,\Delta,\varepsilon)$ be a ${\k}$-bialgebra. Let $\mathcal{G}$ be an $r\times r$ multiplicative matrix over $H$. Then
\begin{itemize}
\item[(1)] $\mathcal{G}^{[n]}:=(g_{ij}{}^{[n]})=\mathcal{G}^n$ for any positive integer $n$.
\item[(2)] If $H$ is a Hopf algebra with antipode $S$, then $S(\mathcal{G})\mathcal{G}=\mathcal{G}S(\mathcal{G})=I_r$.
\end{itemize}
\end{lemma}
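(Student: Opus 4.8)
The plan is to deduce both parts directly from the defining identities of a multiplicative matrix, read off entrywise, the only structural inputs being coassociativity, the associativity relation $\widetilde{\otimes}\circ(\widetilde{\otimes}\otimes\id)=\widetilde{\otimes}\circ(\id\otimes\widetilde{\otimes})$ recorded above, and the Remark that $m\circ\widetilde{\otimes}$ is ordinary matrix multiplication. For part (1) I would proceed by induction on $n$, the case $n=1$ being trivial. Iterating the relation $\Delta(\mathcal{G})=\mathcal{G}\,\widetilde{\otimes}\,\mathcal{G}$ (using coassociativity together with the associativity of $\widetilde{\otimes}$, plus the evident compatibility of applying a linear map in one tensor slot with $\widetilde{\otimes}$) gives
\[
\Delta_n(\mathcal{G})=\underbrace{\mathcal{G}\,\widetilde{\otimes}\,\mathcal{G}\,\widetilde{\otimes}\cdots\widetilde{\otimes}\,\mathcal{G}}_{n}\ \in\ \mathcal{M}_r(H^{\otimes n}),
\]
so the $(i,j)$-entry of $\Delta_n(\mathcal{G})$ is $\sum_{k_1,\dots,k_{n-1}}g_{ik_1}\otimes g_{k_1k_2}\otimes\cdots\otimes g_{k_{n-1}j}$; on the other hand, since $\Delta(g_{ij})=\sum_k g_{ik}\otimes g_{kj}$ this same entry is $\sum (g_{ij})_{(1)}\otimes\cdots\otimes(g_{ij})_{(n)}$. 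Applying $m_n$ and using that $m_n$ composed with the $n$-fold $\widetilde{\otimes}$ is the $n$-fold matrix product, the first expression becomes $(\mathcal{G}^n)_{ij}$ and the second becomes $g_{ij}{}^{[n]}$, whence $\mathcal{G}^{[n]}=\mathcal{G}^n$.

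For part (2) I would apply the antipode axioms $\sum S(h_{(1)})h_{(2)}=\varepsilon(h)1=\sum h_{(1)}S(h_{(2)})$ to each entry $h=g_{ij}$. Since $\Delta(g_{ij})=\sum_k g_{ik}\otimes g_{kj}$ and $\varepsilon(g_{ij})=\delta_{ij}$, the left-hand axiom reads $\sum_k S(g_{ik})g_{kj}=\delta_{ij}1$, which is precisely the $(i,j)$-entry of $S(\mathcal{G})\mathcal{G}=I_r$, and the right-hand axiom reads $\sum_k g_{ik}S(g_{kj})=\delta_{ij}1$, i.e. $\mathcal{G}S(\mathcal{G})=I_r$.

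No serious obstacle is expected: part (2) is immediate, and the only point in part (1) requiring a little care is the inductive bookkeeping that identifies the entries of the $n$-fold $\widetilde{\otimes}$-power of $\mathcal{G}$ with the iterated coproduct of the scalar entry $g_{ij}$, which is routine once the associativity of $\widetilde{\otimes}$ noted above is in place.
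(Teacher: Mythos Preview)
Your argument is correct and is the standard entrywise verification. Note that the paper does not actually supply a proof of this lemma: it merely records the statement and cites \cite[Corollary 3]{K-S-Z 2006} and \cite[Proposition 4.2]{Li-Zhu 2019}. Your proof is precisely the routine computation those references contain, so there is nothing to contrast.
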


Primitive elements play an important role in the study of pointed coalgebras, and ``primitive matrices'' play a similar role in non-pointed case.

\begin{definition}
Let $H$ be a coalgebra, and let $\mathcal{C}_{r\times r},\mathcal{D}_{s\times s}$ be two multiplicative matrices.
A matrix $\mathcal{W}\in\mathcal{M}_{r\times s}(H)$ is called a $\left(\mathcal{C},\mathcal{D}\right)$-primitive matrix if $\Delta(\mathcal{W})=\mathcal{C}~\widetilde{\otimes}~\mathcal{W}+\mathcal{W}~\widetilde{\otimes}~\mathcal{D}$.
\end{definition}

\begin{remark}
\emph{It is easy to show that $\varepsilon(\mathcal{W})=0$ for any primitive matrix $\mathcal{W}$.}
\end{remark}

Using the method of coradical orthonormal idempotents, we could express any element in $H_1$ as a sum of entries in multiplicative and primitive matrices. For any subspace $V\subseteq H$, we denote $V\cap \Ker(\varepsilon)$ by $V^+$. The following lemma is important for us (see \cite[Theorem 3.1]{Li-Zhu 2019}).

\begin{lemma}\label{4}
Assume that $\k$ is algebraically closed. Let $H$ be a coalgebra, $C,D$ be simple subcoalgebras of $H$ and $\mathcal{C}=(c_{i^\prime i})_{r\times r},\mathcal{D}=( d_{jj^\prime })_{s\times s}$ be respectively basic multiplicative matrices for $C$ and $D$. Then
\begin{itemize}
\item[(1)] If $C\neq D$, then for any $w\in {}^C H_1{}^D$, there exist $rs$-number of  $\left(\mathcal{C},\mathcal{D}\right)$-primitive matrices
        $$
        \mathcal{W}^{(i^\prime ,j^\prime )}= \left( w_{ij}^{(i^\prime ,j^\prime )} \right)_{r\times s}~~
        (1\leq i^\prime \leq r,1\leq j^\prime \leq s),
        $$
        such that $w=\sum\limits_{i=1}^r \sum\limits_{j=1}^s w_{ij}^{(i,j)}$;
\item[(2)] If $C=D$ and we choose that $\mathcal{D}=\mathcal{C}$, then for any $w\in {}^C H_1{}^C$, there exist $r^2$-number of $\left(\mathcal{C},\mathcal{C}\right)$-primitive matrices
        $$
        \mathcal{W}^{(i^\prime ,j^\prime )}= \left( w_{ij}^{(i^\prime ,j^\prime )} \right)_{r\times s}~~
        (1\leq i^\prime,j^\prime \leq r)
        $$
        such that $w- \sum\limits_{i=1}^r \sum\limits_{j=1}^s w_{ij}^{(i,j)}\in C$.
\end{itemize}
\end{lemma}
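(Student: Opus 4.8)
The plan is to realize the entries of the required primitive matrices as the structure constants of $\Delta$ on $w$, using Proposition~\ref{3}(2) to locate $\Delta(w)$ and then exploiting coassociativity. For part (1), since $w\in{}^C H_1{}^D$, Proposition~\ref{3}(2) gives $\Delta(w)\in C\otimes{}^C H_1{}^D+{}^C H_1{}^D\otimes D$; because $C\neq D$, the first tensorands of the two summands lie in the distinct blocks ${}^C H{}^C$ and ${}^C H{}^D$ of the decomposition in Proposition~\ref{3}(4)(iii), so this sum is direct, and using the bases $\{c_{i'i}\}$ of $C$ and $\{d_{jj'}\}$ of $D$ we may write uniquely
\[
\Delta(w)=\sum_{i',i}c_{i'i}\otimes a_{i'i}+\sum_{j,j'}b_{jj'}\otimes d_{jj'},\qquad a_{i'i},\,b_{jj'}\in{}^C H_1{}^D .
\]
The same reasoning applied to each $a_{i'i}$ writes $\Delta(a_{i'i})=(\text{part in }C\otimes{}^C H_1{}^D)+\sum_{j,j'}\nu^{(i'i)}_{jj'}\otimes d_{jj'}$ for unique $\nu^{(i'i)}_{jj'}\in{}^C H_1{}^D$. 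Applying $\Delta\otimes\id$ and $\id\otimes\Delta$ to the displayed formula for $\Delta(w)$, using $\Delta(\mathcal C)=\mathcal C\,\widetilde{\otimes}\,\mathcal C$, and comparing (via coassociativity) the components that lie in the block ${}^C H{}^C\otimes{}^C H{}^C\otimes{}^C H{}^D$, one reads off in the bases of the first two slots that the $C$-part of $\Delta(a_{i'i})$ is forced to be $\sum_k c_{ik}\otimes a_{i'k}$, i.e.
\[
\Delta(a_{i'i})=\sum_k c_{ik}\otimes a_{i'k}+\sum_{j,j'}\nu^{(i'i)}_{jj'}\otimes d_{jj'} .
\]
Feeding this identity (valid for all indices, in particular with $i$ replaced by $k$) back into coassociativity — apply $\Delta\otimes\id$ and $\id\otimes\Delta$ to $\Delta(a_{i'i})$, cancel the terms already matched, and compare coefficients of $d_{jj'}$ in the last tensor slot — gives
\[
\Delta\big(\nu^{(i'i)}_{jj'}\big)=\sum_k c_{ik}\otimes\nu^{(i'k)}_{jj'}+\sum_l\nu^{(i'i)}_{lj'}\otimes d_{lj},
\]
which says precisely that $\mathcal W^{(i',j')}:=\big(\nu^{(i'i)}_{jj'}\big)_{1\le i\le r,\,1\le j\le s}$ is a $(\mathcal C,\mathcal D)$-primitive matrix, and there are $rs$ of them. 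Finally $(\varepsilon\otimes\id)\Delta(w)=w$ together with $\varepsilon(c_{i'i})=\delta_{i'i}$ and $\varepsilon(b_{jj'})=0$ (Proposition~\ref{3}(3), as $C\neq D$) yields $w=\sum_i a_{ii}$, while $(\id\otimes\varepsilon)$ applied to the formula for $\Delta(a_{i'i})$ yields $a_{i'i}=\sum_j\nu^{(i'i)}_{jj}$; hence $w=\sum_{i=1}^r\sum_{j=1}^s\nu^{(ii)}_{jj}=\sum_{i=1}^r\sum_{j=1}^s w^{(i,j)}_{ij}$.

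For part (2) ($C=D$, $\mathcal D=\mathcal C$) I would run the identical computation. The only genuine new feature is that now $C\subseteq{}^C H_1{}^C$, so $C\otimes{}^C H_1{}^C$ and ${}^C H_1{}^C\otimes C$ overlap in $C\otimes C$: the decompositions of $\Delta(w)$ and of $\Delta(a_{i'i})$ are determined only modulo $C\otimes C$, and $\varepsilon$ need no longer vanish on the ``$b$'' elements, so the counit identities hold only modulo $C$. Fixing an auxiliary vector-space complement of $C$ in ${}^C H_1{}^C$ to pin down the $\nu^{(i'i)}_{jj'}$, the coassociativity arguments go through as before and still produce $(\mathcal C,\mathcal C)$-primitive matrices $\mathcal W^{(i',j')}=\big(\nu^{(i'i)}_{jj'}\big)$; the counit step then gives $w-\sum_{i=1}^r\sum_{j=1}^s\nu^{(ii)}_{jj}\in C$ (the accumulated corrections are $\k$-combinations of the $c$'s), which is the asserted conclusion.

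The step I expect to be the main obstacle is the coassociativity bookkeeping: one has to track the families $a_{i'i}$, $b_{jj'}$ and $\nu^{(i'i)}_{jj'}$ simultaneously and make sure the index contractions generated by $\Delta(\mathcal C)=\mathcal C\,\widetilde{\otimes}\,\mathcal C$ and $\Delta(\mathcal D)=\mathcal D\,\widetilde{\otimes}\,\mathcal D$ fall into the correct tensor slots, which is what turns the raw identities into the primitive-matrix equation; in part (2) there is the extra subtlety of isolating the genuine primitive-matrix data from the $C\otimes C$ ambiguity. Everything else reduces to straightforward applications of Proposition~\ref{3}.
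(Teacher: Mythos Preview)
The paper does not actually prove Lemma~\ref{4}: it is quoted verbatim from \cite[Theorem~3.1]{Li-Zhu 2019} with the remark ``The following lemma is important for us (see \cite[Theorem~3.1]{Li-Zhu 2019})'' and no argument given. So there is no in-paper proof to compare against; your proposal supplies what the paper outsources.

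On its own merits, your argument for part~(1) is correct and complete. The key points---directness of $C\otimes{}^C H_1{}^D + {}^C H_1{}^D\otimes D$ when $C\neq D$ via Proposition~\ref{3}(4), the two-stage coassociativity comparison forcing first the $C$-part of $\Delta(a_{i'i})$ and then the comultiplication of $\nu^{(i'i)}_{jj'}$, and the counit recovery using Proposition~\ref{3}(3)---are all handled cleanly, and the index bookkeeping checks out.

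For part~(2) your diagnosis of the difficulty is right, but the sketch glosses over the one genuinely new step. Fixing a vector-space complement $V$ of $C$ in ${}^C H_1{}^C$ lets you \emph{define} the $\nu^{(i'i)}_{jj'}$ uniquely, but the coassociativity comparison no longer separates into clean blocks (everything now lives in ${}^C H{}^C$), and what falls out is only that the matrices $\mathcal W^{(i',j')}$ satisfy the primitive-matrix identity \emph{modulo} $C\otimes C$. You then need an explicit correction step---adjusting each $\nu^{(i'i)}_{jj'}$ by a suitable element of $C$---to promote this to an honest $(\mathcal C,\mathcal C)$-primitive matrix. This correction can be carried out (in the rank-one pointed case it is simply $w\mapsto w-\varepsilon(w)g$), but it requires a short separate argument that you have not indicated; the phrase ``the coassociativity arguments go through as before and still produce $(\mathcal C,\mathcal C)$-primitive matrices'' hides exactly this. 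Once that correction is made, the conclusion $w-\sum_{i,j}w^{(i,j)}_{ij}\in C$ follows as you say.
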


\section{An annihilation polynomial for antipode}\label{c}

Let $H$ be a finite-dimensional Hopf algebra over $\k$ with the dual Chevalley property and $H_0$ be its coradical. We denote its Loewy length by ${\rm Lw}(H)$. In this section, our aim  is to prove the following annihilation polynomial for $S^2\in\End_\k(H)$:

\begin{theorem}\label{2}
Let $H$ be a finite-dimensional Hopf algebra with the dual Chevalley property. Denote $N:=\exp(H_0)<\infty$ and $L:={\rm Lw}(H)$. Then $$(S^{2N}-\id)^{L-1}=0$$ holds on $H$.
\end{theorem}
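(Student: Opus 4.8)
The plan is to reduce to the algebraically closed case, then mimic the Taft--Wilson argument using multiplicative and primitive matrices in place of grouplike and primitive elements. First I would note that it suffices to prove the identity after extending scalars to $\overline{\k}$, since $\overline{\k}\otimes_\k H$ is again a finite-dimensional Hopf algebra with the dual Chevalley property, with the same exponent of the coradical (by the characterization of $\exp(H_0)$ via the Drinfeld element of $D(H_0)$) and the same Loewy length, and since $(S^{2N}-\id)^{L-1}$ commutes with base field extension. So assume $\k=\overline{\k}$. The coradical filtration $\{H_n\}_{n\geq 0}$ is a Hopf algebra filtration by Lemma \ref{14}, and $H_{L-1}=H$ where $L={\rm Lw}(H)$.

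\textbf{Step 1: the action of $S^{2N}$ on $H_1$.} The heart of the argument is to show $S^{2N}=\id$ on $H_1$. By Proposition \ref{3}(4)(iii) applied to the subcoalgebra $H_1$, it is enough to treat each block ${}^C H_1{}^D$ separately. For the diagonal blocks ${}^C H_0{}^C=C$ we use that $H_0$ is semisimple and cosemisimple (in characteristic $0$ by \cite{L-R 1988}; automatically in characteristic $p$ when combined with Corollary \ref{1}), so that $S^2$ restricted to $H_0$ has order dividing $N=\exp(H_0)$ by Corollary \ref{31}; hence $S^{2N}=\id$ on $H_0$, in particular on each $C$. For an off-diagonal or a complementary primitive part, Lemma \ref{4} lets us write any $w\in{}^C H_1{}^D$ (modulo $C$ when $C=D$) as a sum of entries $w_{ij}^{(i,j)}$ of $(\mathcal{C},\mathcal{D})$-primitive matrices $\mathcal{W}^{(i',j')}$, where $\mathcal{C},\mathcal{D}$ are basic multiplicative matrices of $C,D$. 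Now I would compute $S^2$ on such a primitive matrix: from $\Delta(\mathcal{W})=\mathcal{C}\,\widetilde\otimes\,\mathcal{W}+\mathcal{W}\,\widetilde\otimes\,\mathcal{D}$ and the antipode axioms one gets, as advertised in the introduction, that $S^2(\mathcal{W})=\mathcal{G}_C^{-1}\,\mathcal{W}\,\mathcal{G}_D$ for suitable multiplicative matrices $\mathcal{G}_C,\mathcal{G}_D$ built from $S^2$ acting on the $\mathcal{C},\mathcal{D}$ blocks (concretely $\mathcal{G}_C=S^2(\mathcal{C})$, using part (2) of the Hopf-algebra lemma so that $S(\mathcal{C})\mathcal{C}=I$). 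Iterating, $S^{2k}(\mathcal{W})=\mathcal{G}_C^{-k}\,\mathcal{W}\,\mathcal{G}_D^{k}$; since $S^{2N}=\id$ on $H_0$ forces $\mathcal{G}_C^{N}=S^{2N}(\mathcal{C})\cdot(\text{correction})=I_r$ and similarly $\mathcal{G}_D^N=I_s$, we conclude $S^{2N}(\mathcal{W})=\mathcal{W}$, hence $S^{2N}(w)=w$. This gives $S^{2N}=\id$ on $H_1$.

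\textbf{Step 2: propagating up the filtration.} Set $T:=S^{2N}-\id\in\End_\k(H)$. Step 1 says $T(H_1)=0$, i.e. $T$ kills $H_1$; since $T$ is coalgebra-preserving in the weak sense that $T(H_n)\subseteq H_n$ (as $S^2$ preserves the coradical filtration, the filtration being a Hopf filtration), $T$ restricts to each $H_n$. The key inductive tool is the lemma of Taft--Wilson \cite{T-W 1974} giving, for a Hopf algebra filtration, an injection $H_{n+1}/H_n\hookrightarrow (H_1/H_0)\otimes(H_n/H_{n-1})$ compatible with the relevant maps; equivalently, any $x\in H_{n+1}$ can be recovered (modulo lower terms) from products of elements of $H_1$ and $H_n$. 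Since $S^2$ is an algebra map, $T=S^{2N}-\id$ behaves like a ``twisted derivation'': writing $S^{2N}=\id+T$ and expanding $S^{2N}(ab)=S^{2N}(a)S^{2N}(b)$ gives $T(ab)=T(a)b+aT(b)+T(a)T(b)$. Feeding in $a\in H_1$ with $T(a)=0$ and $b\in H_n$, one gets $T(ab)=aT(b)$, so on the associated graded $\gr H=\bigoplus H_n/H_{n-1}$ (a graded Hopf algebra generated in degrees $0$ and $1$, by the dual Chevalley property) the induced operator $\bar T$ lowers degree by at least one and is a derivation-like operator: $\bar T$ vanishes on degrees $0,1$ and satisfies $\bar T(xy)=\bar T(x)y+x\bar T(y)$ on $\gr H$. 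Since $\gr H$ is generated by degrees $\leq 1$, an easy induction shows $\bar T^{\,k}$ kills everything of degree $\leq k$; taking $k=L-1$ and noting $\gr H$ is concentrated in degrees $0,\dots,L-1$ gives $\bar T^{\,L-1}=0$ on $\gr H$, which lifts to $T^{L-1}(H)\subseteq H_{-1}=0$, i.e. $(S^{2N}-\id)^{L-1}=0$.

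\textbf{Main obstacle.} The routine part is the base change and the combinatorial induction up the filtration; the delicate point is Step 1, specifically verifying that the action of $S^2$ on a $(\mathcal{C},\mathcal{D})$-primitive matrix is genuinely of the conjugation form $\mathcal{W}\mapsto\mathcal{G}_C^{-1}\mathcal{W}\mathcal{G}_D$ with $\mathcal{G}_C,\mathcal{G}_D$ multiplicative and of $N$-bounded multiplicative order, and that this is compatible with the decomposition in Lemma \ref{4} (the indices $(i',j')$ must not interfere). One must be careful that the ``correction'' between $S^2(\mathcal{C})$ and a true multiplicative matrix — which comes from $S^2$ not fixing $\mathcal{C}$ on the nose but only up to conjugation by a matrix in $\mathcal{M}_r(\k)$ — still has the property that some $N$-th power is trivial; this is exactly where $\exp(H_0)=N$ enters through Corollary \ref{31}, and where the hypothesis that $H_0$ is a Hopf subalgebra (so that $\mathcal{C},\mathcal{D}$ really are multiplicative matrices over a Hopf algebra and the lemma $S(\mathcal{C})\mathcal{C}=I_r$ applies) is essential.
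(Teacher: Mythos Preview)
Your overall architecture matches the paper's: reduce to $\k=\overline{\k}$, establish $S^{2N}|_{H_1}=\id$, then propagate along the coradical filtration. The reduction and Step 2 are fine; indeed, your derivation-style argument on $\gr H$ is a legitimate variant of the paper's direct appeal to the Taft--Wilson coalgebra lemma (Lemma \ref{16}), though note that the latter is purely coalgebraic and does not require knowing that $\gr H$ is generated in degrees $\leq 1$.

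The genuine gap is in Step 1. Your key formulas
\[
S^2(\mathcal{W})=\mathcal{G}_C^{-1}\,\mathcal{W}\,\mathcal{G}_D,\qquad
S^{2k}(\mathcal{W})=\mathcal{G}_C^{-k}\,\mathcal{W}\,\mathcal{G}_D^{\,k},\qquad
\mathcal{G}_C^{\,N}=I_r
\]
do not hold as stated. From $S(\mathcal{W})\mathcal{D}=-S(\mathcal{C})\mathcal{W}$ one gets $S(\mathcal{W})=-S(\mathcal{C})\mathcal{W}S(\mathcal{D})$, but applying $S$ again forces you through Lemma \ref{5}, and since transpose does \emph{not} reverse products over a noncommutative ring, the expression for $S^2(\mathcal{W})$ is not a clean two-sided conjugation; it is the nested transpose formula of Lemma \ref{6}. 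Iterating therefore does not give powers of fixed matrices $\mathcal{G}_C,\mathcal{G}_D$, but the increasingly long twisted products in Lemma \ref{6}(3). Moreover, even if it did, $S^{2N}|_{H_0}=\id$ only says $S^{2N}(\mathcal{C})=\mathcal{C}$ entrywise; it says nothing about the matrix power $\bigl(S^2(\mathcal{C})\bigr)^N$, which (by the Sweedler-power lemma) would equal $I_r$ exactly when the \emph{untwisted} $N$th Sweedler power is trivial on $C$, i.e.\ when $\exp_0(H_0)\mid N$ in the sense of Notation \ref{22} --- a different hypothesis from $\exp(H_0)=N$.

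The paper handles this in two moves you are missing. First, it restricts to $(\mathcal{C},1)$-primitive matrices, so the $\mathcal{D}$-side is trivial and the iterated formula (Lemma \ref{6}(3)) involves only the $\mathcal{C}$-block. Second, and crucially, it proves the vanishing at the $N$th step by an explicit pairing computation (Lemma \ref{7}) that uses the \emph{definition} of $\exp(H_0^{\ast\mathrm{op}})=N$ as a twisted Sweedler power identity --- not merely the consequence $S^{2N}|_{H_0}=\id$. The passage from $H_1{}^1$ back to all of $H_1$ is then done multiplicatively via $H_1=H_1{}^1\cdot H_0$ (Proposition \ref{9}, proved with the fundamental theorem of Hopf modules), which replaces your attempt to treat general $(\mathcal{C},\mathcal{D})$-blocks directly. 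So your intuition that ``$S^2$ acts like conjugation'' is the right heuristic (the paper says as much in the introduction), but turning it into a proof requires exactly these two ingredients.
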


\begin{remark}
\emph{For a finite-dimensional pointed Hopf algebra, the same annihilation polynomial was established by Taft and Wilson \cite[Theorem 5]{T-W 1974} in 1974. It could be implied by Theorem \ref{2}, since finite-dimensional pointed Hopf algebras clearly have the dual Chevalley property.}
\end{remark}

Before the proof, an immediate but meaningful conclusion on the order of the antipode should be noted as follows, which implies \cite[Theorem 4.4]{E-G 2002} as well as \cite[Corollary 6]{T-W 1974} for the pointed case. We denote the composition order of $S^2$ by $\ord(S^2)$.

\begin{corollary}\label{11}
Let $H$, $N$ and $L$ be as in Theorem \ref{2}. Then
\begin{itemize}
  \item[(1)] If ${\rm char}~\k=0$, then $\ord(S^2)\mid N$;
  \item[(2)] If ${\rm char}~\k=p>0$, then $\ord(S^2)\mid \gcd(Np^M,\exp(H))$, where $M$ is a natural number satisfying $p^M\geq L-1$.
\end{itemize}
\end{corollary}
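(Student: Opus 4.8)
The plan is to read off both statements directly from the annihilation polynomial $(S^{2N}-\id)^{L-1}=0$ furnished by Theorem \ref{2}, combined with the classical theorem of Radford \cite{Radford 1976} that the antipode of a finite-dimensional Hopf algebra has finite order, so in particular $\ord(S^2)<\infty$. Concretely, I would set $T:=S^{2N}\in\End_\k(H)$. Theorem \ref{2} says $(T-\id)^{L-1}=0$, i.e. $T$ is unipotent, so its minimal polynomial divides $(X-1)^{L-1}$; and since $T$ has finite order, say $T^{k}=\id$, its minimal polynomial also divides $X^{k}-1$. These two divisibilities are the whole engine of the argument.

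For part (1), with $\mathrm{char}\,\k=0$ the polynomial $X^{k}-1$ is separable, so $\gcd\!\big((X-1)^{L-1},X^{k}-1\big)=X-1$. The minimal polynomial of $T$ divides this gcd, hence equals $X-1$, i.e. $T=\id$. Thus $S^{2N}=\id$ and therefore $\ord(S^2)\mid N$ (which recovers \cite[Theorem 4.4]{E-G 2002} and \cite[Corollary 6]{T-W 1974}).

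For part (2), with $\mathrm{char}\,\k=p>0$, I would first fix a natural number $M$ with $p^{M}\geq L-1$; then $(T-\id)^{p^{M}}=0$ as well. Since $T$ commutes with $\id$, additivity of the $p$-power map (``freshman's dream'') in the commutative subalgebra $\k[T]\subseteq\End_\k(H)$ gives the identity $(T-\id)^{p^{M}}=T^{p^{M}}-\id$, valid in characteristic $p$, so $T^{p^{M}}=\id$, i.e. $S^{2Np^{M}}=\id$, and hence $\ord(S^2)\mid Np^{M}$. On the other hand $\exp(H)<\infty$ by \cite[Theorem 4.10]{E-G 1999}, so Corollary \ref{31} gives $S^{2\exp(H)}=\id$ and thus $\ord(S^2)\mid\exp(H)$. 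Combining the two divisibilities yields $\ord(S^2)\mid\gcd(Np^{M},\exp(H))$.

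I do not expect a genuine obstacle: once Theorem \ref{2} is available the corollary is essentially formal. The only points needing a little care are the appeal to Radford's finiteness theorem in characteristic $0$ (without which a nontrivial unipotent part of $S^2$ could not be excluded), and, in characteristic $p$, keeping the choice of $M$ explicit and observing that $(T-\id)^{p^{M}}$ collapses to $T^{p^{M}}-\id$.
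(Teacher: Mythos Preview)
Your proof is correct and follows essentially the same approach as the paper: both use Radford's finiteness theorem together with the unipotence from Theorem~\ref{2} to force $S^{2N}=\id$ in characteristic~$0$, and in characteristic~$p$ both expand $(S^{2N}-\id)^{p^M}$ via the Frobenius and invoke Corollary~\ref{31}. Your version simply spells out the minimal-polynomial and gcd argument more explicitly where the paper says in one line that $S^{2N}$ is ``semisimple but unipotent''.
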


\begin{proof}
\begin{itemize}
  \item[(1)] The order of $S$ is finite, for $H$ is finite-dimensional \cite[Theorem 1]{Radford 1976}. Then $S^{2N}$ is semisimple in characteristic $0$, but unipotent. It follows that $S^{2N}=\id$.
  \item[(2)] It is evident that $S^{2Np^M}-\id=(S^{2N}-\id)^{p^M}=0$. On the other hand, Lemma \ref{31} implies that $\ord{(S^2)}\mid\exp(H)$.
\end{itemize}
\end{proof}

We divide the proof of Theorem \ref{2} into several steps which occupy the following subsections.

\subsection{Antipode on Primitive Matrices}
First of all, we show how algebra anti-endomorphisms act on products of matrices over an (associative) algebra. For a matrix $\mathcal{A}=(a_{ij})_{r\times s}$ over an algebra, we denote the transpose of $\mathcal{A}$ by
$\mathcal{A}^{\rm T}:=(a_{ji})_{s\times r}$.

\begin{lemma}\label{5}
Let $H$ be an associative algebra with an algebra anti-endomorphism $S$. For any matrices $\mathcal{A}_1,\mathcal{A}_2,\cdots,\mathcal{A}_n$ over $H$, we have
$$S(\mathcal{A}_1\mathcal{A}_2\cdots\mathcal{A}_n)^{\rm T}=S(\mathcal{A}_n)^{\rm T}S(\mathcal{A}_{n-1})^{\rm T}\cdots S(\mathcal{A}_1)^{\rm T}$$
as long as the product $\mathcal{A}_1\mathcal{A}_2\cdots\mathcal{A}_n$ is well-defined.
\end{lemma}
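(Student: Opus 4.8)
The plan is to reduce to the case $n=2$ and then induct on $n$; the only substantive content is careful index bookkeeping under transposition, using that an algebra anti-endomorphism is additive ($\k$-linear) and reverses the order of products.

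First I would settle the base case $n=2$. Write $\mathcal{A}_1=(a_{ij})_{r\times s}$ and $\mathcal{A}_2=(b_{jk})_{s\times t}$, so that $\mathcal{A}_1\mathcal{A}_2\in\mathcal{M}_{r\times t}(H)$ has $(i,k)$-entry $\sum_{j=1}^s a_{ij}b_{jk}$. Applying $S$ entrywise and using $S(x+y)=S(x)+S(y)$ together with $S(xy)=S(y)S(x)$, the $(i,k)$-entry of $S(\mathcal{A}_1\mathcal{A}_2)$ is $\sum_{j=1}^s S(b_{jk})S(a_{ij})$, hence the $(k,i)$-entry of $S(\mathcal{A}_1\mathcal{A}_2)^{\rm T}$ equals $\sum_{j=1}^s S(b_{jk})S(a_{ij})$. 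On the other hand, $S(\mathcal{A}_2)^{\rm T}\in\mathcal{M}_{t\times s}(H)$ has $(k,j)$-entry $S(b_{jk})$ and $S(\mathcal{A}_1)^{\rm T}\in\mathcal{M}_{s\times r}(H)$ has $(j,i)$-entry $S(a_{ij})$, so $S(\mathcal{A}_2)^{\rm T}S(\mathcal{A}_1)^{\rm T}\in\mathcal{M}_{t\times r}(H)$ has $(k,i)$-entry $\sum_{j=1}^s S(b_{jk})S(a_{ij})$. The two sides agree entrywise, proving the identity for $n=2$.

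Then I would run the induction. Assuming the identity for products of fewer than $n$ matrices, write $\mathcal{A}_1\cdots\mathcal{A}_n=(\mathcal{A}_1\cdots\mathcal{A}_{n-1})\mathcal{A}_n$ and apply the case $n=2$ to the two factors $\mathcal{A}_1\cdots\mathcal{A}_{n-1}$ and $\mathcal{A}_n$, obtaining $S(\mathcal{A}_1\cdots\mathcal{A}_n)^{\rm T}=S(\mathcal{A}_n)^{\rm T}\,S(\mathcal{A}_1\cdots\mathcal{A}_{n-1})^{\rm T}$; the inductive hypothesis rewrites the last factor as $S(\mathcal{A}_{n-1})^{\rm T}\cdots S(\mathcal{A}_1)^{\rm T}$, which gives the claim. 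I would remark at the outset that the hypothesis ``$\mathcal{A}_1\cdots\mathcal{A}_n$ is well-defined'' is precisely what guarantees the row/column sizes are compatible at each stage, so every matrix product appearing above makes sense.

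I do not expect a genuine obstacle here: the statement is the matrix version of the order-reversal $(xy)\mapsto S(y)S(x)$, and the argument is entirely formal. The one place warranting care is keeping the transpose and the order-reversal straight at the same time — it is easy to mistakenly write $S(\mathcal{A}_1)^{\rm T}S(\mathcal{A}_2)^{\rm T}$ — so I would double-check the index computation in the base case, since that is where all the content lies.
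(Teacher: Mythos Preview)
Your proposal is correct and is exactly the ``direct calculation'' the paper alludes to; the paper's own proof is the single line ``The equation holds due to direct calculations,'' so your entrywise verification for $n=2$ followed by induction is precisely what is intended, only written out in full.
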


\begin{proof}
The equation holds due to direct calculations.
\end{proof}

From now on, suppose that $H$ is a finite-dimensional Hopf algebra with the antipode $S$. With the help of the lemma above, we could in fact compute the image of a certain kind of primitive matrices under $S^{2n}$ for each positive integer $n$.

\begin{lemma}\label{6}
Let $\mathcal{C}=(c_{ij})_{r\times r}$ be a multiplicative matrix, and let $\mathcal{X}=(x_1,x_2,\cdots,x_r)^{\rm T}$ be a $(\mathcal{C},1)$-primitive matrix over $H$.
\begin{itemize}
  \item[(1)] $S(\mathcal{X})=-S(\mathcal{C})\mathcal{X}$;
  \item[(2)] $S^2(\mathcal{X})=((S(\mathcal{C})\mathcal{X})^{\rm T}S^2(\mathcal{C})^{\rm T})^{\rm T}$.
        In other words, $S^2(x_{i})=\sum\limits_{k_1,k_2=1}^r S(c_{k_2 k_1})x_{k_1}S^2(c_{i k_2})$ for each $1\leq i\leq r$;
  \item[(3)] For any positive integer $n$,
           $$
           S^{2n}(\mathcal{X})=
           [[S^{2n-1}(\mathcal{C})\cdots((S(\mathcal{C})\mathcal{X})^{\rm T}S^2(\mathcal{C})^{\rm T})^{\rm T}\cdots]^{\rm T}S^{2n}(\mathcal{C})^{\rm T}]^{\rm T}.
           $$
        Specifically, the $(i,1)$-entry of $S^{2n}(\mathcal{X})$ is
           \begin{eqnarray*}
           S^{2n}(x_{i})
           &=& \sum\limits_{k_1,k_2,\cdots,k_{2n}=1}^r S\left[c_{k_2k_1} S^2(c_{k_4k_3}) \cdots S^{2n-4}(c_{k_{2n-2}k_{2n-3}})
                                                  S^{2n-2}(c_{k_{2n}k_{2n-1}})\right] \\
           & & x_{k_1}S^2\left[ c_{k_3k_2} S^2(c_{k_5k_4})\cdots S^{2n-4}(c_{k_{2n-1}k_{2n-2}}) S^{2n-2}(c_{ik_{2n}})\right].
           \end{eqnarray*}
\end{itemize}
\end{lemma}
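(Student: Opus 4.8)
The plan is to prove the three parts in order, using Lemma \ref{5} (the anti-automorphism behaviour of $S$ on products of matrices together with transposes) and the identity $S(\mathcal{C})\mathcal{C}=\mathcal{C}S(\mathcal{C})=I_r$ from the antipode lemma for multiplicative matrices. For part (1), I would start from the defining relation $\Delta(\mathcal{X})=\mathcal{C}\,\widetilde{\otimes}\,\mathcal{X}+\mathcal{X}\,\widetilde{\otimes}\,1$, that is, $\Delta(x_i)=\sum_k c_{ik}\otimes x_k + x_i\otimes 1$. Applying the antipode axiom $m\circ(S\otimes\id)\circ\Delta=u\circ\varepsilon$ entrywise and using $\varepsilon(x_i)=0$ (which holds for any primitive matrix), one gets $\sum_k S(c_{ik})x_k + S(x_i)=0$, i.e. $S(\mathcal{X})=-S(\mathcal{C})\mathcal{X}$ as a column-vector identity. (The $(\mathcal{C},1)$-primitivity makes the ``$1$'' on the right harmless since $S(x_i)\cdot 1 = S(x_i)$.)

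For part (2), apply $S$ to the relation in (1): $S^2(\mathcal{X})=S(-S(\mathcal{C})\mathcal{X})=-S(S(\mathcal{C})\mathcal{X})$. Now $S(\mathcal{C})\mathcal{X}$ is (the transpose of) a $1\times r$ times $r\times 1$ product, so I invoke Lemma \ref{5} to write $S\big((S(\mathcal{C})\mathcal{X})^{\rm T}\big) = S(\mathcal{X})^{\rm T}\,S(S(\mathcal{C}))^{\rm T} = S(\mathcal{X})^{\rm T}\,S^2(\mathcal{C})^{\rm T}$, and then substitute $S(\mathcal{X})=-S(\mathcal{C})\mathcal{X}$ from (1); the two minus signs cancel, yielding $S^2(\mathcal{X})=\big((S(\mathcal{C})\mathcal{X})^{\rm T}S^2(\mathcal{C})^{\rm T}\big)^{\rm T}$. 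Transcribing this matrix identity into entries, with $\big(S(\mathcal{C})\mathcal{X}\big)_{k_2}=\sum_{k_1}S(c_{k_2 k_1})x_{k_1}$ and the $(1,i)$-entry of $S^2(\mathcal{C})^{\rm T}$ equal to $S^2(c_{ik_2})$, gives exactly $S^2(x_i)=\sum_{k_1,k_2}S(c_{k_2k_1})x_{k_1}S^2(c_{ik_2})$.

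Part (3) is then an induction on $n$, the base case $n=1$ being (2). For the inductive step I would apply $S^2$ to the displayed formula for $S^{2(n-1)}(\mathcal{X})$, peeling it apart exactly as in the passage from (1) to (2): each application of $S$ reverses the order of the factors and transposes each one (Lemma \ref{5}), and each factor $S^{2j-2}(\mathcal{C})$ becomes $S^{2j-1}(\mathcal{C})$, then on applying $S$ a second time becomes $S^{2j}(\mathcal{C})$; the innermost column $\mathcal{X}$ is handled using (1) to re-expose a leading $S(\mathcal{C})$, which is the mechanism that keeps the pattern self-similar and supplies the sign bookkeeping. Carefully tracking the nested transposes produces the bracketed matrix expression, and reading off the $(i,1)$-entry — with the index $k_{2n}$ freshly introduced at the outer layer and all intermediate indices $k_2,\dots,k_{2n-1}$ coming from the previous stage — yields the stated double-indexed sum. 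The main obstacle is purely bookkeeping: getting the nesting of the transposes and the alternation of the powers $S^{2}, S^{4},\dots$ versus $S^{1},S^{3},\dots$ exactly right, and matching the index shifts ($k_{2j-1},k_{2j}$ paired inside $c$'s of the ``$S$-part'' versus $k_{2j},k_{2j+1}$ inside the ``$S^2$-part'') so that the closed-form entrywise expression is literally correct; there is no conceptual difficulty beyond Lemma \ref{5} and $S(\mathcal{C})\mathcal{C}=I_r$.
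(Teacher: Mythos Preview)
Your proof is correct and essentially matches the paper's for parts (1) and (2). For part (3) there is a minor organizational difference worth noting: you set up the induction as $S^{2n}=S^2\circ S^{2(n-1)}$ and apply $S^2$ to the formula for $S^{2(n-1)}(\mathcal{X})$, whereas the paper writes $S^{2n}=S^{2(n-1)}\circ S^2$ and observes that, since $S^2$ is a coalgebra endomorphism, $S^2(\mathcal{X})$ is an $(S^2(\mathcal{C}),1)$-primitive matrix, so the inductive hypothesis at level $n-1$ can be applied directly to the pair $(S^2(\mathcal{C}),S^2(\mathcal{X}))$, after which one simply substitutes the entrywise expression for $S^2(x_{k})$ from (2). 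Both routes lead to the same computation after relabeling indices, but the paper's version sidesteps the nested-transpose bookkeeping you anticipate. Also, the identity $S(\mathcal{C})\mathcal{C}=I_r$ is not actually used in either argument; only Lemma \ref{5} and part (1) are needed.
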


\begin{proof}
\begin{itemize}
  \item[(1)] The definition of $(\mathcal{C},1)$-primitive matrices means that
        $\Delta(\mathcal{X})=\mathcal{C}~\widetilde{\otimes}~\mathcal{X}+\mathcal{X}~\widetilde{\otimes}~1$.
        We map $m\circ(S\otimes\id)$ onto this equation and obtain
        $$0=\varepsilon(\mathcal{X})=S(\mathcal{C})\mathcal{X}+S(\mathcal{X}),$$
        which follows $S(\mathcal{X})=-S(\mathcal{C})\mathcal{X}$ immediately.
  \item[(2)] According to (1) and lemma \ref{5}, it is direct that
        \begin{eqnarray*}
          S^2(\mathcal{X})
          &=& S(-S(\mathcal{C})\mathcal{X})~=~-(S(\mathcal{X})^{\rm T}S^2(\mathcal{C})^{\rm T})^{\rm T}
          ~=~ -((-S(\mathcal{C})\mathcal{X})^{\rm T} S^2(\mathcal{C})^{\rm T})^{\rm T} \\
          &=& ((S(\mathcal{C})\mathcal{X})^{\rm T} S^2(\mathcal{C})^{\rm T})^{\rm T}.
        \end{eqnarray*}
  \item[(3)] We prove it by induction on $n$. Assume that the equation holds for any multiplicative matrix $\mathcal{C}$ and $(\mathcal{C},1)$-primitive matrix $\mathcal{X}$ in case $n-1$. Note that $S^2(\mathcal{C})$ is multiplicative and $S^2(\mathcal{X})$ is $(S^2(\mathcal{C}),1)$-primitive, because $S^2$ is a coalgebra endomorphism (as well as an isomorphism) on $H$. Then
        \begin{eqnarray*}
        & & S^{2n}(x_{i}) ~=~ S^{2n-2}(S^2(x_{i})) \\
        &=& \sum_{k_3,k_4,\cdots,k_{2n}=1}^r S\left[S^2(c_{k_4k_3})\cdots S^{2n-4}(c_{k_{2n-2}k_{2n-3}}) S^{2n-2}(c_{k_{2n}k_{2n-1}})\right] \\
        & &       S^2(x_{k_3}) S^2\left[S^2(c_{k_5k_4})\cdots S^{2n-4}(c_{k_{2n-1}k_{2n-2}}) S^{2n-2}(c_{ik_{2n}})\right] \\
        &=& \sum_{k_3,k_4,\cdots,k_{2n}=1}^r S\left[S^2(c_{k_4k_3})\cdots S^{2n-4}(c_{k_{2n-2}k_{2n-3}}) S^{2n-2}(c_{k_{2n}k_{2n-1}})\right] \\
        & &       \left(\sum_{k_1,k_2=1}^r S(c_{k_2 k_1})x_{k_1}S^2(c_{k_3 k_2})\right)
                  S^2\left[S^2(c_{k_5k_4})\cdots S^{2n-4}(c_{k_{2n-1}k_{2n-2}}) S^{2n-2}(c_{ik_{2n}})\right] \\
        &=& \sum_{k_1,k_2,k_3,k_4,\cdots,k_{2n}=1}^r
                  S\left[S^2(c_{k_4k_3})\cdots S^{2n-4}(c_{k_{2n-2}k_{2n-3}}) S^{2n-2}(c_{k_{2n}k_{2n-1}})\right] \\
        & &       S(c_{k_2 k_1}) x_{k_1} S^2(c_{k_3 k_2}) S^2\left[S^2(c_{k_5k_4})\cdots S^{2n-4}(c_{k_{2n-1}k_{2n-2}}) S^{2n-2}(c_{ik_{2n}})\right] \\
        &=& \sum_{k_1,k_2,k_3,k_4,\cdots,k_{2n}=1}^r
                  S\left[c_{k_2 k_1} S^2(c_{k_4k_3})\cdots S^{2n-4}(c_{k_{2n-2}k_{2n-3}}) S^{2n-2}(c_{k_{2n}k_{2n-1}})\right] \\
        & &       x_{k_1}S^2\left[c_{k_3 k_2}S^2(c_{k_5k_4})\cdots S^{2n-4}(c_{k_{2n-1}k_{2n-2}}) S^{2n-2}(c_{ik_{2n}})\right],
        \end{eqnarray*}
        which is exactly the required equation in case $n$.
\end{itemize}
\end{proof}

It is suggested in Lemma \ref{6} that the mapping $S^2$ on $(\mathcal{C},1)$-primitive matrices is somehow similar to a ``conjugate action by by $\mathcal{C}$''. We show next that such an action has finite order when $H$ has finite exponent.

\begin{lemma}\label{7}
Let $\mathcal{C}=(c_{ij})_{r\times r}$ be a basic multiplicative matrix of a simple subcoalgebra $C\in\mathcal{S}$ of $H$. Assume that $N:=\exp(H)<\infty$. Then for $ 1\leq i,j\leq r$,
\begin{eqnarray*}
 &&\sum_{k_2,k_3,\cdots,k_{2N}=1}^r c_{k_2 j}S^2(c_{k_4k_3})\cdots S^{2N-4}(c_{k_{2N-2}k_{2N-3}})S^{2N-2}(c_{k_{2N}k_{2N-1}}) \\
 && \ \ \ \ \ \ \ \ \ \ \otimes~ c_{k_3 k_2}S^2(c_{k_5k_4})\cdots S^{2N-4}(c_{k_{2N-1}k_{2N-2}})S^{2N-2}(c_{ik_{2N}}) \\
 && =\delta_{ij}(1\otimes 1),
\end{eqnarray*}
which is the $(i,j)$-entry of the identity matrix $\mathcal{I}_r~\widetilde{\otimes}~\mathcal{I}_r\in\mathcal{M}_r(H\otimes H)$.
\end{lemma}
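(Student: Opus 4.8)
The plan is to collapse the left-hand side to a single application of the flip composed with $\Delta$, and then to recognize the resulting element of $H$ as the value, on an entry of $\mathcal{C}$, of the map defining the exponent of the co-opposite Hopf algebra $H^{\mathrm{cop}}$. Write $\tau$ for the flip of $H\otimes H$ and let $A_{ij}$ denote the left-hand side. First I would rewrite the summand as a product in the algebra $H\otimes H$,
$$\bigl(c_{k_2 j}\otimes c_{k_3 k_2}\bigr)\bigl(S^2(c_{k_4 k_3})\otimes S^2(c_{k_5 k_4})\bigr)\cdots\bigl(S^{2N-2}(c_{k_{2N}k_{2N-1}})\otimes S^{2N-2}(c_{i k_{2N}})\bigr),$$
in which each even-numbered index $k_2,k_4,\dots,k_{2N}$ appears in exactly one of the $N$ bracketed factors. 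Since $S^2$, and hence every $S^{2m}$, is a coalgebra automorphism of $H$, the matrix $S^{2m}(\mathcal{C})$ is again multiplicative, and for all $a,b$ one has $\sum_k S^{2m}(c_{kb})\otimes S^{2m}(c_{ak})=\tau\bigl(\Delta(S^{2m}(c_{ab}))\bigr)$. Summing the $m$-th bracket over its internal even index turns it into $\tau(\Delta(\,\cdot\,))$ of an entry of $S^{2m-2}(\mathcal{C})$, and, as $\tau\circ\Delta$ is an algebra homomorphism $H\to H\otimes H$, the whole product collapses to
$$A_{ij}=\tau\Bigl(\Delta\Bigl(\sum_{l_1,\dots,l_{N-1}=1}^{r}c_{l_1 j}\,S^2(c_{l_2 l_1})\,S^4(c_{l_3 l_2})\cdots S^{2N-2}(c_{i l_{N-1}})\Bigr)\Bigr).$$
Thus it suffices to show that $w_{ij}:=\sum_{l_1,\dots,l_{N-1}}c_{l_1 j}S^2(c_{l_2 l_1})\cdots S^{2N-2}(c_{i l_{N-1}})$ equals $\delta_{ij}1$, since then $A_{ij}=\tau(\Delta(\delta_{ij}1))=\delta_{ij}(1\otimes 1)$.

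The second step is to identify $w_{ij}$. From $\Delta_N(c_{ij})=\sum c_{i l_1}\otimes c_{l_1 l_2}\otimes\cdots\otimes c_{l_{N-1}j}$, reversing the tensor factors and relabelling gives the $N$-fold iterated coproduct of $H^{\mathrm{cop}}$, namely $\sum c_{l_1 j}\otimes c_{l_2 l_1}\otimes\cdots\otimes c_{i l_{N-1}}$; consequently $w_{ij}$ is precisely the $(i,j)$-entry of $m_N\circ\bigl(\id\otimes S^2\otimes S^4\otimes\cdots\otimes S^{2N-2}\bigr)\circ\Delta_N^{\mathrm{cop}}$ evaluated on $\mathcal{C}$. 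Because the antipode of $H^{\mathrm{cop}}$ is $S^{-1}$, we have $(S^{-1})^{-2m}=S^{2m}$, so this is exactly the map occurring in the definition of $\exp(H^{\mathrm{cop}})$ for $n=N$. Using $\exp(H^{\mathrm{cop}})=\exp(H)=N$---a standard invariance of the exponent, which follows for instance from its description as the order of the Drinfeld element of the Drinfeld double \cite{E-G 1999}---the displayed map equals $u\circ\varepsilon$, so $w_{ij}=\varepsilon(c_{ij})1=\delta_{ij}1$, which completes the argument.

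The main obstacle is precisely this passage to $H^{\mathrm{cop}}$. Applying the exponent hypothesis directly to $\mathcal{C}$ only yields the matrix identity $\mathcal{C}\,S^{2N-2}(\mathcal{C})\,S^{2N-4}(\mathcal{C})\cdots S^2(\mathcal{C})=I_r$ together with its cyclic rotations; but $w_{ij}$ is the entry of this same product of matrices with the factors multiplied in the \emph{opposite} order inside $H$, and over the noncommutative algebra $H$ a product cannot be reversed for free. One therefore genuinely needs the co-opposite form of the exponent relation, equivalently the identity $\sum h_{(N)}S^2(h_{(N-1)})\cdots S^{2N-2}(h_{(1)})=\varepsilon(h)1$. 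Apart from that, the only delicate point is the index bookkeeping in the first step---checking that every $k_{2m}$ sits in a single bracket and that the reductions reassemble correctly into $\tau\circ\Delta$ of $w_{ij}$.
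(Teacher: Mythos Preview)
Your argument is correct. The paper's proof is dual to yours: instead of working in $H$ and collapsing the sum through the algebra map $\tau\circ\Delta$, it pairs the left-hand side against an arbitrary $f\otimes g\in H^\ast\otimes H^\ast$, rewrites the result as
\[
\Bigl\langle\, m^{\ast\mathrm{op}}_N\circ\bigl(\id\otimes (S^{-1})^{-2}\otimes\cdots\otimes(S^{-1})^{-2N+2}\bigr)\circ\Delta^\ast_N(gf),\;c_{ij}\,\Bigr\rangle,
\]
and then invokes $\exp(H^{\ast\mathrm{op}})=\exp(H)=N$ (citing \cite[Proposition 2.2(4) and Corollary 2.6]{E-G 1999}) to conclude this is $\delta_{ij}\langle f,1\rangle\langle g,1\rangle$. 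So both proofs hinge on the same invariance of the exponent, but applied in dual places: you use $\exp(H^{\mathrm{cop}})=\exp(H)$, the paper uses $\exp(H^{\ast\mathrm{op}})=\exp(H)$, and these are equivalent since $(H^{\mathrm{cop}})^\ast\cong H^{\ast\mathrm{op}}$.

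Your route has the advantage of being more intrinsic: the reduction $A_{ij}=\tau(\Delta(w_{ij}))$ via the algebra homomorphism $\tau\circ\Delta$ is a clean structural observation that immediately halves the problem to a statement in $H$ rather than $H\otimes H$, and avoids the bookkeeping of pairing with two dual functions. The paper's pairing approach, by contrast, is a single long computation with no intermediate simplification, though it makes the role of $\Delta_{2N}(c_{ij})$ explicit from the start. Your identification of the obstacle---that the exponent relation for $H$ itself gives the factors in the wrong order, forcing the passage to $H^{\mathrm{cop}}$---is exactly right, and the paper encounters the same issue (hence its appeal to $H^{\ast\mathrm{op}}$ rather than $H^\ast$).
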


\begin{proof}
Firstly it is known that $\exp(H^{\ast\rm op})=\exp(H)=N$ according to \cite[Proposition 2.2(4) and Corollary 2.6]{E-G 1999}. Here the antipode of dual Hopf algebra $H^{\ast}$ is also denoted as $S$, and then the antipode of $H^{\ast\rm op}$ is actually $S^{-1}$.

Let us prove the equation by taking values of the function $f\otimes g\in H^\ast\otimes H^\ast$ for arbitrary $f,g\in H^\ast$.
Note that
$$
\Delta_{2N}(c_{ij})=\sum_{k_2,k_3,\cdots,k_{2N}=1}^r
c_{ik_{2N}}\otimes c_{k_{2N}k_{2N-1}}\otimes\cdots\otimes c_{k_3k_2}\otimes c_{k_2j}.
$$
The value of $f\otimes g$ on the left side of the required equation is then
\begin{eqnarray*}
 &&\sum_{k_2,k_3,\cdots,k_{2N}=1}^r \langle f,c_{k_2 j}S^2(c_{k_4k_3})\cdots S^{2N-4}(c_{k_{2N-2}k_{2N-3}})S^{2N-2}(c_{k_{2N}k_{2N-1}})\rangle \\
 && \ \ \ \ \ \ \ \ \langle g, c_{k_3 k_2}S^2(c_{k_5k_4})\cdots S^{2N-4}(c_{k_{2N-1}k_{2N-2}})S^{2N-2}(c_{ik_{2N}})\rangle \\
 &&= \sum_{k_2,k_3,\cdots,k_{2N}=1}^r \langle g_{(N)},S^{2N-2}(c_{ik_{2N}})\rangle \langle f_{(N)},S^{2N-2}(c_{k_{2N}k_{2N-1}})\rangle\\
 &&\ \ \ \ \ \ \ \ \langle g_{(N-1)},S^{2N-4}(c_{k_{2N-1}k_{2N-2}})\rangle
                                 \langle f_{(N-1)},S^{2N-4}(c_{k_{2N-2}k_{2N-3}})\rangle\\
 &&\ \ \ \ \ \ \ \ \cdots \langle g_{(2)},S^2(c_{k_5k_4}) \rangle \langle f_{(2)},S^2(c_{k_4k_3}) \rangle
                                      \langle g_{(1)},c_{k_3k_2} \rangle \langle f_{(1)},c_{k_2j} \rangle \\
 &&= \langle\sum S^{2N-2}(g_{(N)})S^{2N-2}(f_{(N)})S^{2N-4}(g_{(N-1)})S^{2N-4}(f_{(N-1)})
     \\
     &&\ \ \ \ \ \ \ \ \cdots S^2(g_{(2)})S^2(f_{(2)})g_{(1)}f_{(1)},c_{ij}\rangle \\
 &&= \left\langle \sum S^{2N-2}(g_{(N)}f_{(N)})S^{2N-4}(g_{(N-1)}f_{(N-1)})\cdots S^2(g_{(2)}f_{(2)})g_{(1)}f_{(1)},c_{ij} \right\rangle \\
 &&= \left\langle m^{\ast {\rm op}}_N\circ\left( \id\otimes(S^{-1})^{-2}\otimes\cdots\otimes (S^{-1})^{-2N+2}\right)\circ\Delta^\ast_N(gf),c_{ij}\right\rangle \\
 &&= \langle gf,1 \rangle \langle \varepsilon,c_{ij} \rangle
 ~=~ \delta_{ij} \langle f,1 \rangle \langle g,1 \rangle ~=~ \langle f\otimes g,\delta_{ij}(1\otimes 1)\rangle.
\end{eqnarray*}
The proof is now complete since $f$ and $g$ are arbitrary linear functions.
\end{proof}

The following proposition is a conclusion of two lemmas above.

\begin{proposition}\label{8}
Let $H$ be a finite-dimensional Hopf algebra with the dual Chevalley property. Denote $N:=\exp(H_0)<\infty$. Then for any basic multiplicative matrix $\mathcal{C}$ and any $(\mathcal{C},1)$-primitive matrix $\mathcal{X}$, we have $S^{2N}(\mathcal{X})=\mathcal{X}$.
\end{proposition}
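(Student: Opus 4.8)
The plan is to substitute $n=N$ into the closed formula of Lemma \ref{6}(3) and then collapse the resulting multiple sum by applying Lemma \ref{7}. The only genuine subtlety is that Lemma \ref{7} presupposes a Hopf algebra of finite exponent, whereas $\exp(H)$ itself need not be finite; so I would first move the computation into the coradical $H_0$.

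\emph{Step 1: passing to $H_0$.} By the dual Chevalley property, $H_0$ is a Hopf subalgebra of $H$, its antipode is the restriction of $S$, and $\exp(H_0)=N<\infty$ by Corollary \ref{1}. Since $\mathcal{C}=(c_{ij})_{r\times r}$ is a basic multiplicative matrix, it is the basic multiplicative matrix of some $C\in\mathcal{S}$; and $C\subseteq H_0$, while all products of the entries $c_{ij}$ and their images under the various $S^{2k}$ remain in $H_0$. Hence Lemma \ref{7} applies to $\mathcal{C}$ inside the Hopf algebra $H_0$: for all $1\le i,j\le r$, the sum over $k_2,\dots,k_{2N}$ displayed there equals $\delta_{ij}(1\otimes 1)$ in $H_0\otimes H_0\subseteq H\otimes H$.

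\emph{Step 2: collapsing the sum.} Writing $\mathcal{X}=(x_1,\dots,x_r)^{\rm T}$ and fixing $i$, for each index $k_1$ I would introduce the $\k$-linear map $\mu_{k_1}\colon H\otimes H\to H$, $a\otimes b\mapsto S(a)\,x_{k_1}\,S^2(b)$. The formula of Lemma \ref{6}(3) in case $n=N$ then reads
\[
S^{2N}(x_i)=\sum_{k_1=1}^{r}\mu_{k_1}(T_{i,k_1}),
\]
where $T_{i,k_1}\in H_0\otimes H_0$ is the sum over $k_2,\dots,k_{2N}$ whose first tensor leg is $c_{k_2k_1}S^2(c_{k_4k_3})\cdots S^{2N-2}(c_{k_{2N}k_{2N-1}})$ and whose second tensor leg is $c_{k_3k_2}S^2(c_{k_5k_4})\cdots S^{2N-2}(c_{ik_{2N}})$. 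This $T_{i,k_1}$ is exactly the left-hand side of the identity from Lemma \ref{7} with $j$ replaced by $k_1$, so $T_{i,k_1}=\delta_{i,k_1}(1\otimes 1)$. Consequently $S^{2N}(x_i)=\mu_i(1\otimes 1)=S(1)\,x_i\,S^2(1)=x_i$, since $S(1)=1$; as $i$ is arbitrary this yields $S^{2N}(\mathcal{X})=\mathcal{X}$.

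\emph{Expected difficulty.} Once Lemmas \ref{6} and \ref{7} are available there is no real obstacle left: the argument is just a bookkeeping match between the index patterns of the two lemmas, the single substantive point being the legitimacy of invoking Lemma \ref{7} for $\mathcal{C}$ inside $H_0$, where the exponent is finite and equals $N$. The effort has been front-loaded into the closed-form computation of Lemma \ref{6}(3) and the exponent identity of Lemma \ref{7}.
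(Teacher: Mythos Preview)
Your proposal is correct and follows essentially the same approach as the paper: both substitute $n=N$ into Lemma \ref{6}(3), observe that $\mathcal{C}$ lives in $H_0$ so that Lemma \ref{7} applies with $N=\exp(H_0)$, and collapse the resulting sum to $\sum_{k_1}\delta_{i,k_1}S(1)x_{k_1}S^2(1)=x_i$. Your explicit packaging via the linear maps $\mu_{k_1}$ and the tensor $T_{i,k_1}$ makes the bookkeeping more transparent than the paper's terse one-line computation, but the underlying argument is identical.
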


\begin{proof}
Denote $\mathcal{C}=(c_{ij})_{r\times r}$ and $\mathcal{X}=(x_1,x_2,\cdots,x_r)^{\rm T}$. Since $\mathcal{C}$ is a multiplicative matrix over $H_0$ as well, Lemma \ref{6}(3) and Lemma \ref{7} imply that for any $1\leq i\leq r$,
\begin{eqnarray*}
 S^{2N}(x_{i})
 &=& \sum\limits_{k_1,k_2,\cdots,k_{2N}=1}^r S\left[c_{k_2k_1} S^2(c_{k_4k_3}) \cdots S^{2N-4}(c_{k_{2N-2}k_{2N-3}})S^{2N-2}(c_{k_{2N}k_{2N-1}})\right] \\
 & & x_{k_1}S^2\left[ c_{k_3k_2} S^2(c_{k_5k_4})\cdots S^{2N-4}(c_{k_{2N-1}k_{2N-2}}) S^{2N-2}(c_{ik_{2N}})\right] \\
 &=& \sum_{k_1=1}^r \delta_{ik_1}S(1)x_{k_1}1 ~=~ x_i.
\end{eqnarray*}
That is to say, $S^{2N}(\mathcal{X})=\mathcal{X}$.
\end{proof}

In Subsection \ref{29}, we have made the convention that a family of coradical orthonormal idempotents $\{e_C\}_{C\in\mathcal{S}}$ is always given. Now recall that according to Proposition \ref{3}(4), the left coideal $H_1{}^1$ could be decomposed as a direct sum:
$$ H_1{}^1=\bigoplus_{C\in\mathcal{S}} {}^C H_1{}^1. $$
On the other hand, if we assume that $\k$ is algebraically closed, then every simple subcoalgebra $C\in\mathcal{S}$ has a basic multiplicative matrix $\mathcal{C}$, and thus each element in ${}^C H_1{}^1$ is a sum of some entries in $(\mathcal{C},1)$-primitive matrices and some elements in $H_0$. This is followed from Lemma \ref{4}. As a consequence, we could obtain the following corollary that the transformation $S^{2N}$ on $H_1{}^1$ equals to identity in this case.

\begin{corollary}\label{30}
Let $H$ be a finite-dimensional Hopf algebra with the dual Chevalley property over an algebraically closed field $\k$. Denote $N:=\exp(H_0)<\infty$. Then $S^{2N}\mid_{H_1{}^1}=\id_{H_1{}^1}$.
\end{corollary}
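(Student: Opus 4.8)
The plan is to combine the direct-sum decomposition $H_1{}^1=\bigoplus_{C\in\mathcal{S}}{}^C H_1{}^1$ from Proposition \ref{3}(4)(i) (applied to the left coideal $V=H_1{}^1=e_1\rightharpoonup H_1$, which is indeed a left coideal since $e_1$ acts on the right) with the structural description of each summand ${}^C H_1{}^1$ provided by Lemma \ref{4}, and then apply Proposition \ref{8} together with the well-known fact $S^{2N}|_{H_0}=\id_{H_0}$. Since $S^{2N}$ is a linear endomorphism of $H$, it suffices to check that it fixes each summand ${}^C H_1{}^1$ pointwise.

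First I would fix an arbitrary $C\in\mathcal{S}$ and, using that $\k$ is algebraically closed, choose a basic multiplicative matrix $\mathcal{C}=(c_{ij})_{r\times r}$ of $C$; also pick a basic multiplicative matrix $\mathcal{D}$ for the trivial simple subcoalgebra $\k 1$, which we may take to be the $1\times 1$ matrix $(1)$. Then Lemma \ref{4} applies with $\mathcal{D}=(1)$: for $C\neq \k 1$, every $w\in{}^C H_1{}^1$ can be written as $w=\sum_{i=1}^r w_{i1}^{(i,1)}$ where the $\mathcal{W}^{(i^\prime,1)}=(w_{i1}^{(i^\prime,1)})_{r\times 1}$ are $(\mathcal{C},1)$-primitive matrices (i.e. $(\mathcal{C},(1))$-primitive); for $C=\k 1$ we instead get $w-\sum_i w_{i1}^{(i,1)}\in \k 1\subseteq H_0$. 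In either case $w$ is a sum of entries of $(\mathcal{C},1)$-primitive matrices plus an element of $H_0$.

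Next I would invoke Proposition \ref{8}: for each $(\mathcal{C},1)$-primitive matrix $\mathcal{X}$ over $H$ we have $S^{2N}(\mathcal{X})=\mathcal{X}$, hence $S^{2N}$ fixes each entry $w_{i1}^{(i,1)}$. For the $H_0$-part, note $N=\exp(H_0)<\infty$ by Corollary \ref{1}, and by Corollary \ref{31} applied to the Hopf algebra $H_0$ (whose antipode is the restriction of $S$, since $H_0$ is a Hopf subalgebra by the dual Chevalley property) we get $S^{2N}|_{H_0}=\id_{H_0}$; in particular $S^{2N}$ fixes the element of $H_0$ appearing in the decomposition of $w$. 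By linearity $S^{2N}(w)=w$ for every $w\in{}^C H_1{}^1$, and summing over $C\in\mathcal{S}$ gives $S^{2N}|_{H_1{}^1}=\id_{H_1{}^1}$.

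There is essentially no serious obstacle here — the corollary is a bookkeeping assembly of Lemma \ref{4}, Proposition \ref{8} and the $H_0$-case. The only point requiring a little care is making sure the two parts of Lemma \ref{4} (the $C\neq D$ and $C=D$ cases) are both subsumed, and that in the $C=\k 1$ case the leftover term genuinely lands in $H_0$ so that $S^{2N}|_{H_0}=\id$ can be used; neither is difficult. One should also record explicitly that $H_1{}^1$ is a left coideal so that Proposition \ref{3}(4)(i) legitimately yields the claimed direct-sum decomposition indexed by $\mathcal{S}$.
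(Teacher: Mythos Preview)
Your proposal is correct and follows essentially the same approach as the paper: the paper's argument (including the paragraph immediately preceding the corollary) also decomposes $H_1{}^1=\bigoplus_{C\in\mathcal{S}}{}^C H_1{}^1$ via Proposition~\ref{3}(4), invokes Lemma~\ref{4} to express elements of each summand via entries of $(\mathcal{C},1)$-primitive matrices plus an $H_0$-term, and then applies Proposition~\ref{8} together with $S^{2N}|_{H_0}=\id_{H_0}$. Your write-up is in fact a bit more careful than the paper's one-line proof, explicitly separating the $C\neq\k 1$ and $C=\k 1$ cases of Lemma~\ref{4} and citing Corollary~\ref{31} for the $H_0$-part.
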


\begin{proof}
This is because $S^{2N}$ keeps any basic multiplicative matrix $\mathcal{C}$ as well as any $(\mathcal{C},1)$-primitive matrix.
\end{proof}

\subsection{Antipode on $H_1$}

Our first goal in this subsection is to give the following proposition.

\begin{proposition}\label{9}
Let $H$ be a Hopf algebra with the dual Chevalley property. Then $H_1=H_1{}^1\cdot H_0$.
\end{proposition}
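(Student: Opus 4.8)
The plan is to prove $H_1 = H_1{}^1 \cdot H_0$ by exploiting the decomposition of $H_1$ under the family $\{e_C\}_{C\in\mathcal{S}}$ of coradical orthonormal idempotents. Since $H_0$ is a Hopf subalgebra (the dual Chevalley property) and $\{H_n\}$ is a Hopf algebra filtration by Lemma~\ref{14}, multiplication by elements of $H_0$ preserves $H_1$, so the inclusion $H_1{}^1 \cdot H_0 \subseteq H_1$ is immediate and the real content is the reverse inclusion. The first reduction is to pass to an algebraic closure: the formation of coradical filtration, coradical, and the subspaces $H_1{}^1$, $H_0$ all commute with the field extension $\k \subseteq \bar\k$ (the coradical of $H \otimes_\k \bar\k$ is $H_0 \otimes_\k \bar\k$, etc.), and an equality of subspaces can be checked after a faithfully flat base change; so I may assume $\k = \bar\k$ and use basic multiplicative matrices freely.

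Next I would use Proposition~\ref{3}(4) to write $H_1$ as a direct sum. Since $H_1$ is a subcoalgebra, part (iii) gives $H_1 = \bigoplus_{C,D\in\mathcal{S}} {}^C H_1{}^D$, and it suffices to show each block ${}^C H_1{}^D$ lies in $H_1{}^1 \cdot H_0$. Fix basic multiplicative matrices $\mathcal{C} = (c_{i'i})_{r\times r}$ of $C$ and $\mathcal{D} = (d_{jj'})_{s\times s}$ of $D$; by Lemma~\ref{4}, any $w \in {}^C H_1{}^D$ is (up to an element of $C \subseteq H_0$, in the case $C=D$) a sum $\sum_{i,j} w_{ij}^{(i,j)}$ of entries of $(\mathcal{C},\mathcal{D})$-primitive matrices $\mathcal{W} = (w_{ij})_{r\times s}$. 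The key computational step is then: given a $(\mathcal{C},\mathcal{D})$-primitive matrix $\mathcal{W}$, produce a $(\mathcal{C},1)$-primitive matrix $\mathcal{X}$ such that the entries of $\mathcal{W}$ lie in (entries of $\mathcal{X}$)$\cdot H_0$. The natural candidate is $\mathcal{X} := \mathcal{W} \, S(\mathcal{D})^{\mathrm{T}}$-type expression — more precisely, since $\mathcal{D}$ is a multiplicative matrix over $H_0$ with $\mathcal{D}\,S(\mathcal{D}) = I_s$, one sets $\mathcal{X} := \mathcal{W} S(\mathcal{D})$ and checks using $\Delta(\mathcal{W}) = \mathcal{C}\,\widetilde\otimes\,\mathcal{W} + \mathcal{W}\,\widetilde\otimes\,\mathcal{D}$ together with $\Delta(S(\mathcal{D})) = S(\mathcal{D})\,\widetilde\otimes\,S(\mathcal{D})$ (antipode is an anti-coalgebra map, so $S(\mathcal{D})$ is again multiplicative) that $\Delta(\mathcal{X}) = \mathcal{C}\,\widetilde\otimes\,\mathcal{X} + \mathcal{X}\,\widetilde\otimes\,I_s$, i.e. $\mathcal{X}$ is $(\mathcal{C}, I_s)$-primitive, hence a column-stack of $(\mathcal{C},1)$-primitive matrices after restricting to $H^{+}$ via $e_1 = \varepsilon$ on the right. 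Then $\mathcal{W} = \mathcal{X}\,\mathcal{D}$ recovers $\mathcal{W}$ from $\mathcal{X}$ and entries of $\mathcal{D} \subseteq H_0$, so the entries of $\mathcal{W}$ lie in $H_1{}^1 \cdot H_0$ (one must also confirm $\mathcal{X}$'s entries lie in $H_1$ and in fact in $H_1{}^1$, which follows since $e_1$ applied on the right to entries of $\mathcal{X}$ reproduces them, as $\mathcal{X}$ is $(\mathcal{C},1)$-primitive).

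The step I expect to be the main obstacle is bookkeeping the idempotent-decorations: making sure that the matrix $\mathcal{X} = \mathcal{W} S(\mathcal{D})$ really has entries in $H_1{}^1$ (not merely in $H_1$) and handling the $C=D$ case where Lemma~\ref{4}(2) only controls $w$ modulo $C$. For the former, I would note that $\mathcal{W}$ being $(\mathcal{C},\mathcal{D})$-primitive forces its entries into $H_1$, and right-hitting by $e_1$ is compatible with right multiplication by the $H_0$-entries of $S(\mathcal{D})$ in the appropriate way since the relevant coproduct components land in $C \otimes (\cdots) + (\cdots) \otimes \k 1$. For the latter, the leftover element of $C$ is in $H_0 = H_0{}^1 \cdot H_0 \subseteq H_1{}^1 \cdot H_0$ trivially (take the $H_0$ factor to absorb it, or note $1 \in H_1{}^1$), so it causes no trouble. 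Assembling the blocks, $H_1 = \bigoplus_{C,D} {}^C H_1{}^D \subseteq H_1{}^1 \cdot H_0$, which gives the proposition.
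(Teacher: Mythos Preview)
Your approach is genuinely different from the paper's. The paper argues via the fundamental theorem of Hopf modules: it puts a right $H_0$-Hopf module structure on $H_1/H_0$, identifies the coinvariants as $(H_1{}^1+H_0)/H_0$, and concludes $H_1/H_0\cong (H_1{}^1+H_0)/H_0\otimes H_0$, whence $H_1=H_1{}^1\cdot H_0$. Your argument is instead an explicit entry-by-entry computation with multiplicative and primitive matrices, which has the virtue of staying inside the toolkit already set up in the preliminaries, at the cost of more bookkeeping.

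Your strategy is sound, but one step is not correct as stated. The matrix $S(\mathcal{D})$ is \emph{not} multiplicative: since $S$ is an anti-coalgebra map, one has $\Delta(S(d_{jj'}))=\sum_m S(d_{mj'})\otimes S(d_{jm})$, so it is $S(\mathcal{D})^{\mathrm{T}}$ that is multiplicative. Consequently $\mathcal{X}:=\mathcal{W}S(\mathcal{D})$ is not $(\mathcal{C},I_s)$-primitive; a direct computation gives
\[
\Delta(x_{ij'})=\sum_{k,m} c_{ik}S(d_{mj'})\otimes x_{km}\;+\;x_{ij'}\otimes 1,
\]
whose first summand lies in $(C\cdot S(D))\otimes H$, not in $C\otimes H$. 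Fortunately this is enough for your purposes: the entries of $\mathcal{X}$ still satisfy $\Delta(x)\in H_0\otimes H + H\otimes\k 1$, so $x-x^1\in H_0$ and hence $x\in H_1{}^1+H_0$. Then $\mathcal{W}=\mathcal{X}\mathcal{D}$ gives $w_{ij}\in (H_1{}^1+H_0)\cdot H_0=H_1{}^1\cdot H_0$, exactly as you want. So the fix is simply to drop the primitivity claim for $\mathcal{X}$ and argue directly from the displayed coproduct formula; the rest of your outline (base change to $\bar\k$, block decomposition via Proposition~\ref{3}(4)(iii), Lemma~\ref{4}, and the trivial absorption of the $H_0$-remainder in the $C=D$ case) goes through unchanged.
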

\begin{proof} It seems that this is known and we give an approach to prove it for safety. At first, using comultiplication and multiplication one can show that there is a right $H_0$-Hopf module structure on $H_1/H_0$. Secondly, it is straightforward to show that the space of coinvariants of this right Hopf module is exactly $(H_1{}^1+H_0)/H_0$. At last, we can apply the fundamental theorem of Hopf modules (\cite[Propostion 1]{L-S 1969}) to get the result.
\end{proof}

Proposition \ref{9} provides that $H_1{}^1$ and $H_0$ generate $H_1$ by multiplication. Then we can continue to investigate whether $S^{2N}$ could be identified with the identity map on $H_1$ or not.

\begin{proposition}\label{10}
Let $H$ be a finite-dimensional Hopf algebra with the dual Chevalley property over an algebraically closed field $\k$. Denote $N:=\exp(H_0)<\infty$. Then $S^{2N}\mid_{H_1}=\id_{H_1}$.
\end{proposition}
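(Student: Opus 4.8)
The plan is to combine Corollary \ref{30} with Proposition \ref{9} by exploiting that $S^{2N}$ is an algebra endomorphism of $H$ (since $S$ is an algebra anti-endomorphism, $S^2$ and hence $S^{2N}$ are algebra endomorphisms). Write an arbitrary element of $H_1$ using Proposition \ref{9} as a finite sum $h = \sum_i a_i b_i$ with $a_i \in H_1{}^1$ and $b_i \in H_0$. Applying the multiplicativity of $S^{2N}$ gives $S^{2N}(h) = \sum_i S^{2N}(a_i) S^{2N}(b_i)$, so it suffices to know that $S^{2N}$ fixes $H_1{}^1$ pointwise and fixes $H_0$ pointwise.

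The first of these is precisely Corollary \ref{30}, so the only new input needed is that $S^{2N}\mid_{H_0} = \id_{H_0}$. For this I would invoke Corollary \ref{31}: since $H_0$ is a finite-dimensional Hopf algebra (it is a Hopf subalgebra of $H$ by the dual Chevalley property) with $\exp(H_0) = N < \infty$, Corollary \ref{31} applied to $H_0$ gives that $S_{H_0}{}^{2N} = \id$ on $H_0$. Because $H_0$ is a Hopf subalgebra of $H$, the antipode of $H$ restricts to the antipode of $H_0$, so $S^{2N}$ restricted to $H_0$ is the identity. Putting the two restrictions together yields $S^{2N}(h) = \sum_i a_i b_i = h$.

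The step requiring the most care is the bookkeeping that $S^{2N}$ really restricts correctly to $H_0$ and that the factorization $H_1 = H_1{}^1 \cdot H_0$ interacts well with $S^{2N}$; but since $S^{2N}$ is a genuine algebra map defined on all of $H$, there is no obstacle here beyond citing the right statements. One should note that $S^{2N}$ also preserves $H_1$ (it preserves the coradical filtration, being a coalgebra automorphism), so the equality is between well-defined endomorphisms of $H_1$. I expect no substantive difficulty in this proposition — it is a short assembly of Corollary \ref{30}, Corollary \ref{31} (applied to $H_0$), and Proposition \ref{9} — the genuine work having already been done in establishing the behaviour of $S^{2N}$ on primitive matrices.
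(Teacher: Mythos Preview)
Your proposal is correct and follows essentially the same argument as the paper: the paper's proof likewise cites Corollaries \ref{31} and \ref{30} to get $S^{2N}$ acting as the identity on $H_0$ and on $H_1{}^1$, and then invokes Proposition \ref{9} together with the fact that $S^{2N}$ is an algebra morphism to conclude. Your additional remarks about $S$ restricting to the antipode of $H_0$ and $S^{2N}$ preserving $H_1$ are reasonable pieces of bookkeeping that the paper leaves implicit.
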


\begin{proof}
We already know that the algebra morphism $S^{2N}$ restricted to subspaces $H_0$ or $H_1{}^1$ is supposed to be the identity (Corollaries \ref{31} and \ref{30}). Now that Proposition \ref{9} gives $H_1=H_1{}^1\cdot H_0$, consequently $S^{2N}\mid_{H_1}=\id_{H_1}$ holds in this case.
\end{proof}

\subsection{Proof of Theorem \ref{2}}

We remark firstly a classical result on coradical filtrations in \cite[Proposition 4]{T-W 1974}, which holds for non-pointed coalgebras as well.

\begin{lemma}\emph{(}\cite[Proposition 4]{T-W 1974}\emph{)}\label{16}
Let $H$ be an arbitrary coalgebra. Let $i$ be a positive integer and $\varphi:H\rightarrow H$ be a coalgebra endomorphism, such that
$(\varphi-\id)(H_j)\subseteq H_{j-1}$ for all $0\leq j\leq i$. Then $(\varphi-\id)(H_{i+1})\subseteq H_i$.
\end{lemma}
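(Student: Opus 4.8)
The plan is to reduce the claim to the inductive (wedge) description of the coradical filtration, namely the identity
$$H_i=\Delta^{-1}\left(H_0\otimes H+H\otimes H_{i-1}\right),$$
which characterizes membership in $H_i$ purely through the comultiplication. Writing $\psi:=\varphi-\id$, the hypothesis at $j=0$ gives $\psi|_{H_0}=0$ (taking $H_{-1}=0$), and for a general $h\in H_{i+1}$ the goal becomes showing $\Delta(\psi(h))\in H_0\otimes H+H\otimes H_{i-1}$.

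First I would record the two structural facts that drive everything. Since $\varphi$ is a coalgebra endomorphism it preserves the coradical filtration, so $\varphi(H_b)\subseteq H_b$ for every $b$; and since $\varphi$ is comultiplicative, $\Delta\circ\varphi=(\varphi\otimes\varphi)\circ\Delta$. Expanding $\varphi\otimes\varphi-\id\otimes\id=\psi\otimes\varphi+\id\otimes\psi$ then yields the Leibniz-type identity
$$\Delta\circ\psi=(\psi\otimes\varphi+\id\otimes\psi)\circ\Delta$$
valid on all of $H$. This is the computational heart of the argument.

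Next I would apply this to $h\in H_{i+1}$ using $\Delta(h)\in\sum_{a+b=i+1}H_a\otimes H_b$, and analyze the image term by term. For a summand $u\otimes v$ with $u\in H_a$, $v\in H_b$ and $a+b=i+1$, the contribution $\psi(u)\otimes\varphi(v)$ is handled as follows: when $a=0$ it vanishes since $\psi|_{H_0}=0$; when $1\le a\le i$ it lies in $H_{a-1}\otimes H_b$ with $(a-1)+b=i$, hence in $H_0\otimes H$ (if $a=1$) or in $H\otimes H_{i-1}$ (if $a\ge 2$, forcing $b\le i-1$); and when $a=i+1$, so $b=0$, it lies in $H\otimes H_0\subseteq H\otimes H_{i-1}$, using $i\ge1$. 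Symmetrically, the contribution $u\otimes\psi(v)$ vanishes for $b=0$, lies in $H\otimes H_{i-1}$ for $1\le b\le i$ (since then $b-1\le i-1$), and lies in $H_0\otimes H$ when $b=i+1$, so $a=0$. Summing, $\Delta(\psi(h))\in H_0\otimes H+H\otimes H_{i-1}$, and the wedge characterization gives $\psi(h)\in H_i$, as required.

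The main obstacle, and really the only subtlety, is the boundary behavior at the extreme terms $a=i+1$ and $b=i+1$, where the hypothesis $\psi(H_j)\subseteq H_{j-1}$ is not assumed, since it runs only up to $j=i$. The point is that in those terms the opposite tensor factor sits in $H_0$, on which $\varphi$ maps into $H_0$ and $\psi$ vanishes, so no control over $\psi$ on $H_{i+1}$ is ever needed; this is exactly where the assumption that $i$ is a positive integer, so that $H_0\subseteq H_{i-1}$, enters. The rest is bookkeeping on the indices $a,b$ to confirm that each interior term lands in $H_0\otimes H+H\otimes H_{i-1}$.
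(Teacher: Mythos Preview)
Your argument is correct. The paper does not supply its own proof of this lemma; it simply cites \cite[Proposition 4]{T-W 1974} and uses the statement as a black box. Your proof is essentially the standard one (and matches the original Taft--Wilson argument): write $\psi=\varphi-\id$, use the Leibniz identity $\Delta\circ\psi=(\psi\otimes\varphi+\id\otimes\psi)\circ\Delta$ coming from comultiplicativity of $\varphi$, feed in the splitting $\Delta(H_{i+1})\subseteq\sum_{a+b=i+1}H_a\otimes H_b$, and check that every term lands in $H_0\otimes H+H\otimes H_{i-1}$. Your handling of the boundary cases $a=i+1$ and $b=i+1$, where the hypothesis on $\psi$ is unavailable but the companion factor sits in $H_0$, is exactly the point that makes the induction go through, and you identify it clearly.
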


\textbf{Proof of Theorem \ref{2} when $\k$ is algebraically closed.}  Combining Lemma \ref{16} and Proposition \ref{10}, it is clear that the statement of Theorem \ref{2} holds when the base field $\k$ is algebraically closed.\qed

Next we want to prove Theorem \ref{2} using the method of field extensions. For the purpose, it is necessary to show that $\exp(H_0)$ and ${\rm Lw}(H)$ are invariant under field extensions due to the dual Chevalley property. The following lemma seems also known.

\begin{lemma}\label{32}
Let $H$ be a finite-dimensional Hopf algebra with the dual Chevalley property over $\k$. Suppose $K$ is a field extension of $\k$, and $H\otimes K$ denotes the extended finite-dimensional $K$-Hopf algebra. Then
\begin{itemize}
  \item[(1)] $H\otimes K$ has the dual Chevalley property with coradical $H_0\otimes K$;
  \item[(2)] The coradical filtration of $H\otimes K$ is $\{H_n\otimes K\}_{n\geq 0}$;
  \item[(3)] Moreover $\exp(H_0\otimes K)=\exp(H_0)$ and ${\rm Lw}(H\otimes K)={\rm Lw}(H)$.
\end{itemize}
\end{lemma}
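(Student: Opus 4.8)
The plan is to prove the three assertions in sequence, since each builds on the previous one. For part (1), I would start from the characterization of the dual Chevalley property: $H$ has the dual Chevalley property iff $H_0$ is a Hopf subalgebra. Extension of scalars is exact and commutes with finite-dimensional tensor and dual constructions, so $(H\otimes K)^\ast \cong H^\ast \otimes K$ as algebras. The coradical of a coalgebra is the dual of the Jacobson radical quotient of the (finite-dimensional) dual algebra; since $\mathrm{rad}(H^\ast\otimes K)\supseteq \mathrm{rad}(H^\ast)\otimes K$ always, and one gets equality here because $H^\ast/\mathrm{rad}(H^\ast) = H_0{}^\ast$ is separable (being the dual of the cosemisimple coalgebra $H_0$ which, under the dual Chevalley property, is a Hopf algebra, hence cosemisimple over any field, hence its dual is separable), I conclude $(H\otimes K)_0 = H_0\otimes K$. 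Then $H_0\otimes K$ is visibly a Hopf subalgebra of $H\otimes K$, giving the dual Chevalley property for $H\otimes K$.

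For part (2), I would argue inductively that the coradical filtration satisfies $(H\otimes K)_n = H_n\otimes K$. The key point is the wedge description: $(H\otimes K)_{n+1} = (H\otimes K)_n \wedge (H\otimes K)_0$, where the wedge is $\Delta^{-1}\bigl((H\otimes K)_n \otimes (H\otimes K) + (H\otimes K)\otimes (H\otimes K)_0\bigr)$. Since $\Delta_{H\otimes K} = \Delta_H\otimes \mathrm{id}_K$ and preimages of subspaces of the form $V\otimes K$ under a $K$-linear map obtained by base change from a $\k$-linear map are again of the form $W\otimes K$ with $W$ the corresponding $\k$-preimage, the wedge over $K$ is the base change of the wedge over $\k$. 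Starting from $(H\otimes K)_0 = H_0\otimes K$ (part (1)), induction yields $(H\otimes K)_n = H_n\otimes K$ for all $n$.

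For part (3), the Loewy length statement is immediate from part (2): ${\rm Lw}(H\otimes K)$ is the least $L$ with $(H\otimes K)_{L-1} = H\otimes K$, i.e. $H_{L-1}\otimes K = H\otimes K$, i.e. $H_{L-1} = H$, which is ${\rm Lw}(H)$. For the exponent, I would use the identification from \cite{E-G 1999} that $\exp(H_0)$ equals the multiplicative order of the Drinfeld element $u_{D(H_0)}$ in the Drinfeld double; alternatively, and more elementarily, one can argue directly from the definition of $\exp$ via the maps $m_n\circ(\mathrm{id}\otimes S^{-2}\otimes\cdots)\circ\Delta_n$, all of which are base-changed from the corresponding $\k$-linear maps on $H_0$, so the condition $m_n\circ(\cdots)\circ\Delta_n = u\circ\varepsilon$ holds over $K$ iff it holds over $\k$; hence the minimal such $n$ is unchanged. (Here one uses that $H_0$, being cosemisimple with the dual Chevalley property, has bijective antipode and finite exponent by Corollary \ref{1}, and that $H_0\otimes K$ is its base change with the same property.)

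The main obstacle is part (1), specifically showing $\mathrm{rad}(H^\ast\otimes K) = \mathrm{rad}(H^\ast)\otimes K$, equivalently that $H_0$ base-changes to the coradical of $H\otimes K$ — this is exactly where the dual Chevalley property is essential, because for a general finite-dimensional coalgebra the coradical can grow under inseparable field extensions. The resolution is that the dual Chevalley property forces $H_0$ to be a cosemisimple Hopf algebra, and cosemisimplicity is preserved under arbitrary base change (the integral/normalized-integral argument, or the fact that a Hopf algebra is cosemisimple iff its dual is separable, which persists under field extension); this rules out the pathological growth. Everything else is routine bookkeeping about base change commuting with $\otimes$, $\ast$, $\Delta$, $\varepsilon$, $S$, preimages, and sums of subspaces.
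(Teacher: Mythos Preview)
Your proposal is correct and close in spirit to the paper's argument, though your packaging of part~(1) differs. The paper argues on the coalgebra side: one inclusion $H_0\otimes K\subseteq (H\otimes K)_0$ comes from Larson's result that cosemisimplicity of a Hopf algebra is preserved under base change; the reverse inclusion comes from observing that $\{\wedge^n(H_0\otimes K)\}_{n\ge 0}$ is a coalgebra filtration of $H\otimes K$ and invoking Sweedler's result that the bottom of any coalgebra filtration contains the coradical. You instead pass to the dual and show $\mathrm{rad}(H^\ast\otimes K)=\mathrm{rad}(H^\ast)\otimes K$ via separability of $H_0{}^\ast$. Both routes hinge on the same nontrivial input---that a cosemisimple Hopf algebra remains cosemisimple after arbitrary field extension (the integral argument)---so they are equivalent; your dual formulation is perhaps more algebraic, while the paper's is slightly more self-contained on the coalgebra side. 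Parts~(2) and~(3) in your proposal match the paper's proof essentially verbatim: wedge commutes with base change by induction, Loewy length follows at once, and the exponent invariance is \cite[Proposition~2.2(8)]{E-G 1999} (your direct argument via base-changing $m_n\circ(\cdots)\circ\Delta_n$ is exactly how that proposition is proved).
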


\begin{proof}
The definition of $H\otimes K$ could be found in \cite[Exercise 7.1.8]{Radford 2012} for example.
\begin{itemize}
  \item[(1)] Regard $H_0\otimes K\hookrightarrow H\otimes K$ as a subspace canonically, which is in fact a Hopf subalgebra over $K$. Meanwhile, $H_0\otimes K$ is cosemisimple because $H_0$ is (\cite[Lemma 1.3]{Larson 1971}). Thus $H_0\otimes K$ is contained in the coradical $(H\otimes K)_0$.

      On the other hand, the coalgebra structure of $H\otimes K$ follows naturally that $\{\wedge^n(H_0\otimes K)\}_{n\geq 0}$ is a coalgebra filtration of $H\otimes K$, which implies that $H_0\otimes K\supseteq (H\otimes K)_0$ by \cite[Proposition 11.1.1]{Sweedler 1969}.

      As a conclusion, the coradical $(H\otimes K)_0=H_0\otimes K$, and the dual Chevalley property for $H\otimes K$ could be obtained since $H_0\otimes K$ is closed under the multiplication evidently.
  \item[(2)] This could be inferred with $(H\otimes K)_0=H_0\otimes K$ as well as
        $$(H_0\otimes K)\wedge (H_n\otimes K)=(H_0\wedge H_n)\otimes K$$
        for each $n\geq 0$.
  \item[(3)] The equation ${\rm Lw}(H\otimes K)={\rm Lw}(H)$ follows immediately from (2). The exponent is invariant under field extensions is stated in \cite[Proposition 2.2(8)]{E-G 1999}.
\end{itemize}
\end{proof}

\textbf{Proof of Theorem \ref{2} for general $\k$.}
Let $K:=\overline{\k}$ be the algebraically closure of $\k$. Then $H\otimes K$ is a finite-dimensional Hopf algebra over the algebraically closed field $K$. We know from Lemma \ref{32} that $H\otimes K$ has the dual Chevalley property, $\exp(H_0\otimes K)=N$ and ${\rm Lw}(H\otimes K)=L$ according to our notations.

Therefore, if we denote the antipode and identity map of $H\otimes K$ by $S_{H\otimes K}$ and $\id_{H\otimes K}$ respectively, then we already know that
\begin{equation}\label{33}
(S_{H\otimes K}{}^{2N}-\id_{H\otimes K})^{L-1}=0
\end{equation}
holds in $\End_K(H\otimes K)$. Now we apply Equation \eqref{33} on element $h\otimes 1_K\in H\otimes K$ for $h\in H$, and obtain
$$(S^{2N}-\id)^{L-1}(h)\otimes 1_K=0.$$
This implies that $(S^{2N}-\id)^{L-1}=0$ holds in $\End_\k(H)$ too.\qed

\section{Two Applications}\label{d}
In this section, we want to give two applications of our main result. Both of them are concern with an important gauge invariant which is called quasi-exponent.
\subsection{A Characterization of Quasi-Exponent}
In \cite{E-G 2002}, they introduced a gauge invariant called the quasi-exponent for finite-dimensional Hopf algebras, which has similar properties with the exponent but always finite. Precisely, the \textit{quasi-exponent} of a finite-dimensional Hopf algebra $H$, denoted by $\qexp(H)$, is defined to be the least positive integer $n$ such that the $n$th power of the Drinfeld element in $D(H)$ is unipotent (\cite[Definition 2.1]{E-G 2002}).

When $H$ is moreover pointed over $\mathbb C$, there is a description of $\qexp(H)$ in the following Etingof-Gelaki's theorem (see \cite[Theorem 4.6]{E-G 2002}).

\begin{lemma}\label{34}
Let $H$ be a finite-dimensional pointed Hopf algebra over $\mathbb C$. Then $\qexp(H)=\exp(G(H))$.
\end{lemma}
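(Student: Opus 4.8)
The plan is to establish the two divisibilities $\qexp(H)\mid\exp(G(H))$ and $\exp(G(H))\mid\qexp(H)$. Write $G:=G(H)$. Since $H$ is pointed, $H_0=\mathbb{C}[G]$, and for a finite group the exponent of its group algebra over $\mathbb{C}$ equals the group exponent, $\exp(\mathbb{C}[G])=\exp(G)$ (the Drinfeld element of $D(\mathbb{C}[G])$ has multiplication order $\exp(G)$; cf.\ \cite{E-G 1999}, and combine with the identification of $\exp$ with the order of the Drinfeld element recalled above). Moreover $\mathbb{C}[G]$ is semisimple and cosemisimple, hence so is $D(\mathbb{C}[G])$, so its Drinfeld element is already a semisimple element and therefore $\qexp(\mathbb{C}[G])=\exp(\mathbb{C}[G])=\exp(G)$. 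Thus Lemma \ref{34} is equivalent to $\qexp(H)=\qexp(H_0)$.

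For the lower bound $\exp(G)\mid\qexp(H)$ I would use that $H_0=\mathbb{C}[G]$ is a Hopf subalgebra of $H$ together with the monotonicity of the quasi-exponent under Hopf subalgebras from \cite{E-G 2002}, namely $\qexp(K)\mid\qexp(H)$ whenever $K\subseteq H$ is a Hopf subalgebra; applying this to $K=H_0$ gives $\exp(G)=\qexp(H_0)\mid\qexp(H)$. If one prefers not to invoke this monotonicity directly, one may instead pass to $H^\ast$, which admits the Hopf algebra surjection $H^\ast\twoheadrightarrow(\mathbb{C}[G])^\ast$ dual to the inclusion $H_0\hookrightarrow H$; since $\qexp(H)=\qexp(H^\ast)$ and $\qexp((\mathbb{C}[G])^\ast)=\qexp(\mathbb{C}[G])=\exp(G)$, the corresponding statement for Hopf quotients yields the same bound.

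The substantial direction is the upper bound $\qexp(H)\mid\exp(G)$, and here the plan is to pass to the associated graded of the coradical filtration. By Lemma \ref{14} that filtration is a Hopf algebra filtration, so $\gr H$ is a coradically graded Hopf algebra; since $H$ is pointed, the projection of $\gr H$ onto its degree-zero component is a Hopf algebra surjection onto $\mathbb{C}[G]$ splitting the inclusion, so Radford's biproduct theorem gives $\gr H\cong R\,\#\,\mathbb{C}[G]$ with $R$ a connected graded braided Hopf algebra. One then needs two facts: (i) $\qexp(H)\mid\qexp(\gr H)$, i.e.\ the quasi-exponent does not increase when passing to the associated graded of the coradical filtration; and (ii) $\qexp(R\,\#\,\mathbb{C}[G])=\exp(\mathbb{C}[G])=\exp(G)$, which should hold because the connected graded factor $R$ is conilpotent and hence contributes only a unipotent factor to the Drinfeld element of $D(R\,\#\,\mathbb{C}[G])$, its semisimple part being governed entirely by the group-algebra part $D(\mathbb{C}[G])$. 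For (i) the idea is to equip $D(H)$ with the filtration induced from the coradical filtration of $H$, to note that the Drinfeld element $u_{D(H)}$ is compatible with it, and to compare $u_{D(H)}$ with the Drinfeld element of the associated graded object, so that the order of the semisimple part of $u_{D(H)}$ divides that of its image there. Combining (i) and (ii) yields $\qexp(H)\mid\exp(G)$, which with the lower bound completes the proof.

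I expect the main obstacle to be precisely step (i) together with the precise form of (ii): controlling the semisimple part of the Drinfeld element of $D(H)$ under passage to $\gr H$, and isolating the unipotent contribution of a connected graded bosonization, is essentially the $D(H)$-side counterpart of Theorem \ref{2}, and is the place where pointedness — equivalently, the dual Chevalley property — is genuinely used. We note finally that Lemma \ref{34} will in any case be subsumed by the general identity $\qexp(H)=\exp(H_0)$ for Hopf algebras with the dual Chevalley property obtained below.
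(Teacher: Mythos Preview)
The paper does not prove Lemma~\ref{34} itself: it is quoted from \cite[Theorem~4.6]{E-G 2002}, and the paper only records that ``its proof is based on'' the two statements collected in Lemma~\ref{35}. Thus the intended argument is the short chain
\[
\qexp(H)\;\overset{\text{\ref{35}(1)}}{=}\;\qexp(\gr H)
\;\overset{\text{\ref{35}(2)}}{=}\;\lcm\bigl(\qexp(H_0),\ord(S^2)\bigr)
\;=\;\exp(G(H)),
\]
the last equality using $\qexp(\mathbb C[G])=\exp(G)$ together with the Taft--Wilson fact $\ord(S^2)\mid\exp(G(H))$ for pointed $H$ (\cite{T-W 1974}, recorded here as the pointed case of Corollary~\ref{11}(1)). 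This is exactly the computation reproduced in the proof of Proposition~\ref{15}.

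Your plan heads in the same direction---pass to $\gr H$---but replaces the two black boxes of Lemma~\ref{35} by hand-made substitutes, and this is where the proposal is genuinely incomplete. Your step~(i) asks only for the divisibility $\qexp(H)\mid\qexp(\gr H)$; Lemma~\ref{35}(1) gives equality outright, so no separate lower-bound argument is needed. More importantly, your step~(ii), that the connected graded factor $R$ in the bosonization $R\,\#\,\mathbb C[G]$ ``contributes only a unipotent factor to the Drinfeld element of $D(R\,\#\,\mathbb C[G])$'', is precisely the content of Lemma~\ref{35}(2) together with $\ord(S^2)\mid\exp(G)$, and you have not indicated how to prove it directly at the level of $D(H)$; the heuristic ``$R$ is conilpotent'' does not by itself control the semisimple part of $u_{D(R\#\mathbb C[G])}$, because the Drinfeld element mixes $R$, $\mathbb C[G]$ and their duals nontrivially. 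In short, your steps~(i)--(ii) are not wrong in spirit, but they amount to re-deriving Lemma~\ref{35} rather than using it; once you invoke Lemma~\ref{35} the proof collapses to the two-line computation above, and your separate monotonicity argument for the lower bound becomes unnecessary. Your closing remark is correct: the statement is indeed subsumed by Proposition~\ref{15}.
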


Its proof is based on following lemma which is a combination of \cite[Lemma 4.2]{E-G 2002} and \cite[Proposition 4.3]{E-G 2002}.

\begin{lemma}\label{35}
Let $H$ be a finite-dimensional Hopf algebra over $\mathbb C$.
\begin{itemize}
  \item[(1)] If $H$ is filtered and let ${\gr}H$ be its associated graded Hopf algebra. Then $\qexp(H)=\qexp({\gr}H)$.
  \item[(2)] Assume that $H$ is graded with zero part $H_{(0)}$. Then $$\qexp(H)=\lcm(\qexp(H_{(0)}),\ord(S^2)),$$ where $\lcm$ denotes the least common multiple.
\end{itemize}
\end{lemma}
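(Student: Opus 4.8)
\textbf{Proof proposal for Lemma \ref{35}.} The plan is to reduce the entire statement to the purely Drinfeld-double description of the quasi-exponent, so that everything becomes a question about the Drinfeld element $u_{D(H)}$ and the associated grading/filtration on $D(H)$. First I would recall that $\qexp(H)$ is the least $n$ with $u_{D(H)}{}^n$ unipotent, and that $D(\gr H)\cong \gr D(H)$ once one equips $D(H)$ with the filtration induced from the coradical filtration on $H$ (and the dual one on $H^\ast$); this is the technical heart of part (1). Under this identification, the image of $u_{D(H)}$ in $\gr D(H)$ is precisely $u_{D(\gr H)}$, and an element of a filtered algebra is unipotent if and only if its principal symbol is unipotent in the associated graded algebra — more precisely, $u^n-1$ lies in the radical-like part coming from lower filtration degree exactly when its symbol does. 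Turning this equivalence into the statement $\qexp(H)=\qexp(\gr H)$ is the main obstacle, since one must check that ``$u^n$ unipotent in $D(H)$'' and ``$u^n$ unipotent in $\gr D(H)$'' really do coincide; this uses finite-dimensionality (so the filtration is finite and nilpotence of $u^n-1$ modulo lower degree forces genuine nilpotence) together with the compatibility of $S_{D(H)}{}^2$ with the filtration.

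For part (2), I would use the grading directly. When $H$ is graded with degree-zero part $H_{(0)}$, the double $D(H)$ inherits a grading whose degree-zero part is $D(H_{(0)})$, and the Drinfeld element decomposes accordingly: write $u=u_{(0)}+u_{+}$ where $u_{(0)}$ is the degree-zero component and $u_+$ has strictly positive degree. The key computation is that $u_{(0)}$ is (up to the usual sign/antipode normalization) the Drinfeld element of $D(H_{(0)})$, and that conjugation by $u$ implements $S_{D(H)}{}^{-2}$, which on the degree-zero part restricts to $S_{D(H_{(0)})}{}^{-2}$ and in general is a graded automorphism. From this one sees that $u^n$ is unipotent if and only if simultaneously $u_{(0)}{}^n$ is unipotent (giving the divisibility by $\qexp(H_{(0)})$) and the grading automorphism $S^{2}$ raised to the appropriate power is trivial on all of $H$ — because a graded element is unipotent precisely when its degree-zero part is unipotent and the conjugation action it induces on the grading is itself unipotent, which for an automorphism of finite order means trivial. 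Packaging these two conditions together yields $\qexp(H)=\lcm(\qexp(H_{(0)}),\ord(S^2))$.

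The step I expect to be genuinely delicate is the ``filtered $\Rightarrow$ graded'' passage in part (1): one has to know that taking the Drinfeld double commutes with passing to the associated graded Hopf algebra, which requires that the coradical filtration of $H$ and the induced filtration on $H^\ast$ fit together into a bialgebra filtration of $D(H)$ with $\gr D(H)\cong D(\gr H)$. Granting this isomorphism (which is where one invokes that the filtrations are Hopf-algebra filtrations, as in Lemma \ref{14}), the rest is linear algebra: a unit $u$ in a finite-dimensional filtered algebra has $u^n$ unipotent if and only if the symbol of $u^n$ is unipotent in the associated graded, because the kernel of $\gr$ is nilpotent. Everything else — identifying degree-zero components of Drinfeld elements, and recognizing $\ord(S^2)$ as exactly the order of the grading automorphism induced by conjugation by $u$ — is a direct unwinding of definitions, so I would not belabor it.
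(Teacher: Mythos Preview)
The paper does not supply a proof of this lemma: it is stated as the combination of \cite[Lemma 4.2]{E-G 2002} and \cite[Proposition 4.3]{E-G 2002}, with no argument given. So there is nothing in the paper itself to compare your sketch against beyond that citation.

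Regarding the sketch on its own merits, the Drinfeld-double framework is indeed the right setting, but both parts have genuine gaps as written. For (1), the phrase ``the kernel of $\gr$ is nilpotent'' does not parse: there is no algebra homomorphism $D(H)\to\gr D(H)$, the passage to the associated graded is only $\k$-linear and not multiplicative, and hence your claimed equivalence ``$u^n$ unipotent in $D(H)$ iff its principal symbol is unipotent in $\gr D(H)$'' is unjustified---indeed it fails in general finite filtered algebras. You correctly flag $D(\gr H)\cong\gr D(H)$ as delicate, but the step you call ``just linear algebra'' is where the real work lies; one needs an additional mechanism (for instance a Rees-type one-parameter family joining $H$ to $\gr H$ together with a constancy argument for $\qexp$) rather than a symbol comparison. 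For (2), the assertion that $D(H)$ ``inherits a grading whose degree-zero part is $D(H_{(0)})$'' is problematic: with the only grading that makes the cross relations in the double homogeneous one must place $(H_{(i)})^\ast$ in degree $-i$, and then the degree-zero component is $\bigoplus_i (H_{(i)})^\ast\otimes H_{(i)}$, strictly larger than $D(H_{(0)})$; with the $\mathbb{N}$-grading you appear to intend, the multiplication in $D(H)$ is \emph{not} homogeneous, so there is no nilpotent ``positive-degree ideal'' to quotient by. In short, the reductions you propose do not go through as stated, and the arguments in \cite{E-G 2002} are more delicate than your outline suggests.
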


With the help of Lemma \ref{35} and Corollary \ref{11}(1), we could describe the quasi-exponent of finite-dimensional Hopf algebras with the dual Chevalley property.

\begin{proposition}\label{15}
Let $H$ be a finite-dimensional Hopf algebra with the dual Chevalley property over $\mathbb C$. Then
$\qexp(H)=\exp(H_0)$.
\end{proposition}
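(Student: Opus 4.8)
The plan is to reduce to the associated graded Hopf algebra and then combine the two parts of Lemma~\ref{35} with Corollary~\ref{11}(1). First I would note that, since $H$ has the dual Chevalley property, Lemma~\ref{14} guarantees that the coradical filtration $\{H_n\}_{n\geq 0}$ is a Hopf algebra filtration; in particular $H$ is a filtered Hopf algebra in the sense of Lemma~\ref{35}(1), and its associated graded $\gr H$ is a finite-dimensional graded Hopf algebra of the same dimension as $H$ whose degree-zero part is $(\gr H)_{(0)}=H_0$. I would then record the routine fact that $\gr H$ again has the dual Chevalley property with coradical $(\gr H)_0=H_0$: the degree-zero component $H_0$ is cosemisimple, hence contained in the coradical, while $\{\bigoplus_{i\leq n}H_i/H_{i-1}\}_{n\geq0}$ is a coalgebra filtration of $\gr H$, forcing the reverse inclusion. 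Consequently $\exp((\gr H)_0)=\exp(H_0)=N$, so that Corollary~\ref{11}(1) applies to $\gr H$.

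Next I would run the chain
\[
\qexp(H)=\qexp(\gr H)=\lcm\!\left(\qexp\bigl((\gr H)_{(0)}\bigr),\,\ord(S_{\gr H}^2)\right)=\lcm\!\left(\qexp(H_0),\,\ord(S_{\gr H}^2)\right),
\]
the first equality being Lemma~\ref{35}(1) and the second Lemma~\ref{35}(2) applied to $\gr H$. It then remains to identify the two arguments of the $\lcm$. On one hand, Corollary~\ref{11}(1) applied to the finite-dimensional dual-Chevalley Hopf algebra $\gr H$ over $\mathbb{C}$ yields $\ord(S_{\gr H}^2)\mid\exp((\gr H)_0)=N$. On the other hand, $\qexp(H_0)=\exp(H_0)=N$: by Corollary~\ref{1} one has $\exp(H_0)<\infty$, so the Drinfeld element $u_{D(H_0)}$ has finite multiplicative order via the identification $\exp(A)=\ord(u_{D(A)})$ of \cite{E-G 1999}; being of finite order in a finite-dimensional $\mathbb{C}$-algebra, $u_{D(H_0)}$ is diagonalizable, whence any unipotent power $u_{D(H_0)}^{n}$ must equal $1$, and so the least $n$ with $u_{D(H_0)}^{n}$ unipotent is precisely $\ord(u_{D(H_0)})=\exp(H_0)$. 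Combining, $\lcm(N,\ord(S_{\gr H}^2))=N$, hence $\qexp(H)=N=\exp(H_0)$.

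The hard part is really only the bookkeeping in the first step—checking that $\gr H$ genuinely inherits the dual Chevalley property and that $(\gr H)_0=H_0$ with the same exponent $N$—together with making sure that the hypotheses of Lemma~\ref{35} and Corollary~\ref{11}(1) are met verbatim for $\gr H$ (a filtered, resp. graded, finite-dimensional Hopf algebra over $\mathbb{C}$). Once that is settled, the statement is a short formal concatenation of Lemma~\ref{35}, Corollary~\ref{11}(1) and Corollary~\ref{1}, with no further computation required.
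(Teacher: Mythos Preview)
Your proposal is correct and follows essentially the same route as the paper: pass to $\gr H$ via Lemma~\ref{35}(1), apply Lemma~\ref{35}(2) with zero part $H_0$, and then kill the $\ord(S^2)$-factor using Corollary~\ref{11}(1) together with $\qexp(H_0)=\exp(H_0)$. The only cosmetic difference is that the paper invokes $\ord(S_{\gr H}^{2})=\ord(S^{2})$ and applies Corollary~\ref{11}(1) directly to $H$, whereas you instead verify that $\gr H$ inherits the dual Chevalley property and apply Corollary~\ref{11}(1) to $\gr H$; both are equally valid.
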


\begin{proof}
As mentioned in Lemma \ref{14}, the dual Chevalley property implies that $H$ is a filtered Hopf algebra with the filtration $\{H_n\}_{n\geq 0}$. Thus
$$\qexp(H)=\qexp({\gr}H)$$
holds by Lemma \ref{35}(1). Meanwhile, Lemma \ref{35}(2) provides the equation
$$\qexp({\gr}H)=\lcm(\qexp(({\gr}H)_{(0)}),\ord(S_{{\gr}H}{}^2)).$$
As a conclusion, we have
\begin{eqnarray*}
\qexp(H)
 &=& \qexp({\gr}H)
 ~=~ \lcm(\qexp(({\gr}H)_{(0)}),\ord(S_{{\gr}H}{}^2)) \\
 &=& \lcm(\qexp(H_0),\ord(S^2))
 ~=~ \qexp(H_0),
\end{eqnarray*}
as long as we note that the zero part $({\gr}H)_{(0)}=H_0$ and $\ord(S_{{\gr}H}{}^2)=\ord(S^2)$ in our situation. Besides, the last equation holds because of Corollary \ref{11}(1) by noting that $\qexp(H_0)=\exp(H_0)$.
\end{proof}

Since the quasi-exponent is a gauge invariant, we have the following corollary which was known for pointed Hopf algebras (see \cite[Corollary 4.8]{E-G 2002}):

\begin{corollary}
Let $H$ and $H^\prime$ be two finite-dimensional Hopf algebras with the dual Chevalley property over $\mathbb C$. If they are twist equivalent, then $\exp(H_0)=\exp(H^\prime{}_0)$.
\end{corollary}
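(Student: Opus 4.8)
The plan is to read this off directly from Proposition \ref{15} together with the fact, established by Etingof and Gelaki in \cite{E-G 2002}, that the quasi-exponent is a gauge invariant. First I would note that twist equivalence of $H$ and $H^\prime$ forces $\qexp(H)=\qexp(H^\prime)$: a Drinfeld twist on $H$ induces a twist on the Drinfeld double $D(H)$ which carries the Drinfeld element to that of the twisted double up to conjugation, so the least positive integer $n$ such that the $n$th power of the Drinfeld element is unipotent is unchanged. This is precisely the invariance statement recorded in \cite{E-G 2002} (it is the input behind \cite[Corollary 4.8]{E-G 2002} in the pointed case).

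Next, both $H$ and $H^\prime$ are finite-dimensional Hopf algebras with the dual Chevalley property over $\mathbb{C}$ by hypothesis, so Proposition \ref{15} applies to each of them and yields $\qexp(H)=\exp(H_0)$ and $\qexp(H^\prime)=\exp(H^\prime{}_0)$. Chaining the three equalities gives
$$\exp(H_0)=\qexp(H)=\qexp(H^\prime)=\exp(H^\prime{}_0),$$
which is the assertion.

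There is essentially no obstacle here; the substantive work has already been carried out in Proposition \ref{15}, which identifies $\qexp$ with $\exp(H_0)$ on the nose for such $H$. The one subtlety worth flagging is that one cannot compare the coradicals directly: a Drinfeld twist alters the comultiplication and hence the coradical, and need not preserve the dual Chevalley property, so the argument must be routed through the gauge-invariant quantity $\qexp(H)$ rather than through $H_0$ itself. With that observation the corollary is immediate, and the proof is the verbatim analogue of \cite[Corollary 4.8]{E-G 2002} with the group of grouplikes $G(H)$ replaced by the coradical $H_0$.
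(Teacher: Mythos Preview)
Your proof is correct and follows exactly the approach the paper intends: the corollary is presented there as an immediate consequence of Proposition \ref{15} together with the gauge invariance of $\qexp$ established in \cite{E-G 2002}. Your additional remark about why one must pass through $\qexp$ rather than compare coradicals directly is a nice clarification, but the core argument is the same.
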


Note that the quasi-exponent is invariant under taking duals of finite-dimensional Hopf algebras. Thus a dual version of Proposition \ref{15} could be given, which holds for Hopf algebras with the Chevalley property.
Recall that a finite-dimensional Hopf algebra $H$ is said to have the \textit{Chevalley property}, if the tensor product of any two simple $H$-modules is semisimple, or, equivalently, if the radical of $H$ is a Hopf ideal. And it is clear that $H$ has the Chevalley property if and only if $H^\ast$ has the dual Chevalley property.

\begin{corollary}
Let $H$ be a finite-dimensional Hopf algebra with the Chevalley property over $\mathbb C$, and let $H/{\rm Rad}(H)$ be its semisimple quotient. Then
$\qexp(H)=\exp(H/{\rm Rad}(H))$.
\end{corollary}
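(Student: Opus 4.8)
The plan is to dualize Proposition~\ref{15}. Since, as just recalled, $H$ has the Chevalley property if and only if $H^\ast$ has the dual Chevalley property, Proposition~\ref{15} applies verbatim to the finite-dimensional Hopf algebra $H^\ast$ over $\mathbb{C}$ and gives $\qexp(H^\ast)=\exp((H^\ast)_0)$. Because the quasi-exponent is invariant under taking duals of finite-dimensional Hopf algebras, $\qexp(H)=\qexp(H^\ast)$, so it only remains to identify $\exp((H^\ast)_0)$ with $\exp(H/{\rm Rad}(H))$.

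First I would pin down the coradical $(H^\ast)_0$. The largest cosemisimple subcoalgebra of $H^\ast$ is the annihilator ${\rm Rad}(H)^{\perp}$ of the Jacobson radical of $H$, and the evaluation pairing induces a coalgebra isomorphism $(H^\ast)_0\cong (H/{\rm Rad}(H))^\ast$ — the standard duality relating the coradical of $H^\ast$ to the semisimple quotient of $H$ (see e.g.\ \cite[Chapter~5]{Montgomery 1993}). Since ${\rm Rad}(H)$ is a Hopf ideal by the Chevalley property, $H/{\rm Rad}(H)$ is the semisimple quotient Hopf algebra appearing in the statement, and the isomorphism above is one of Hopf algebras. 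Second, I would invoke the invariance of the exponent under duality, $\exp\!\big((H/{\rm Rad}(H))^\ast\big)=\exp(H/{\rm Rad}(H))$: for the semisimple (hence cosemisimple and involutory) Hopf algebra $H/{\rm Rad}(H)$ over $\mathbb{C}$ this is standard, and it also follows from $\exp(B^{\ast\mathrm{op}})=\exp(B)$ (\cite[Proposition~2.2]{E-G 1999}, already used in the proof of Lemma~\ref{7}) together with the invariance of the exponent under passing to the opposite multiplication. Chaining the identifications yields
$$\qexp(H)=\qexp(H^\ast)=\exp((H^\ast)_0)=\exp\!\big((H/{\rm Rad}(H))^\ast\big)=\exp(H/{\rm Rad}(H)).$$

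The argument uses no new idea beyond Proposition~\ref{15} and the gauge-invariance of the quasi-exponent; the only point requiring care is the duality bookkeeping — verifying that the identification $(H^\ast)_0\cong(H/{\rm Rad}(H))^\ast$ respects the Hopf-algebra structure (so that the compared exponents are genuinely exponents of Hopf algebras) and keeping the $\mathrm{op}/\mathrm{cop}$ conventions straight when quoting the invariance of the exponent under duals. This is the step I expect to occupy most of the write-up, although it is entirely routine.
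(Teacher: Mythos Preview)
Your proposal is correct and follows exactly the route the paper intends: the paper does not spell out a proof but simply remarks that the corollary is the dual version of Proposition~\ref{15} via the invariance of the quasi-exponent under taking duals, which is precisely the chain of identifications you wrote down. Your additional care with the identification $(H^\ast)_0\cong (H/{\rm Rad}(H))^\ast$ and the invariance of $\exp$ under duals just makes explicit what the paper leaves implicit.
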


This corollary implies \cite[Propostion 4.13]{E-G 2002} for the case when $H$ is moreover pointed.

\subsection{Quasi-Exponent of a Pivotal Hopf Algebra}

In this subsection, we concentrate on a kind of semidirect product of a Hopf algebra $H$, which is denoted by $H\rtimes \k\langle S^2\rangle$. It is a pivotal Hopf algebra containing $H$ and appears in some researches such as \cite{Shimizu 2015}. Thus we think that it is interesting to investigate the exponent and quasi-exponent of $H\rtimes \k\langle S^2\rangle$. Let us begin by recalling the corresponding concepts.

\begin{definition}
A Hopf algebra is called \textit{pivotal} if there exists a grouplike element $g\in H$ such that
$$S^2(h)=ghg^{-1}, \;\;\;\;h\in H.$$
Such an grouplike element $g$ is called a \emph{pivotal element} of $H$.
\end{definition}

When a Hopf algebra $H$ is finite-dimensional, the subgroup generated by $S^2\in\End_\k(H)$, which is denoted by $\langle S^2\rangle$, is finite by \cite[Theorem 1]{Radford 1976}.

\begin{definition}
Let $H$ be a finite-dimensional Hopf algebra. The semidirect product \emph{(}or, smash product\emph{)} Hopf algebra $H\rtimes \k\langle S^2\rangle$ of $H$ with $\langle S^2\rangle$ is defined through:
\begin{itemize}
  \item $H\rtimes \k\langle S^2\rangle=H\otimes \k\langle S^2\rangle$ as a coalgebra;
  \item The multiplication is that $(h\rtimes S^{2i})(k\rtimes S^{2j}):=hS^{2i}(k)\rtimes S^{2(i+j)}$ for all $h,k\in H$ and $i,j\in\mathbb Z$;
  \item The unit element is $1\rtimes \id$;
  \item The antipode is then $S_{H\rtimes \k\langle S^2\rangle}:h\rtimes S^{2i}\mapsto S^{-2i+1}(h)\rtimes S^{-2i}$.
\end{itemize}
\end{definition}

\begin{remark}
\emph{1) The algebra
$H\rtimes \k\langle S^2\rangle$ is in fact a Hopf algebra (e.g. \cite[Theorem 2.13]{Molnar 1977}) with $H\cong H\rtimes \id$ as its Hopf subalgebra.}

\emph{2) It is pivotal with a pivotal element $1\rtimes S^2$, since
\begin{eqnarray*}
S_{H\rtimes \k\langle S^2\rangle}^2(h\rtimes S^{2i})
&=& S^2(h)\rtimes S^{2i} \\
&=& (1\rtimes S^2)(h\rtimes S^{2i})(1\rtimes S^2)^{-1}
\end{eqnarray*}
for  $h\in H$ and $i\in\mathbb Z$.}
\end{remark}

For the remaining of this subsection, we aim to establish a formula for the exponent of $H\rtimes \k\langle S^2\rangle$, and then deduce its quasi-exponent when $H$ has the dual Chevalley property. The process goes forward mainly by direct calculations, and the following notation should be given for our purpose. It could be regarded as a special case of twisted exponents introduced in \cite[Definition 3.1]{S-V 2017} and \cite[Definition 3.1]{M-V-W 2016}.

\begin{notation}\label{22}
Let $H$ be a finite-dimensional Hopf algebra. For any $i\in\mathbb Z$, we denote
$$\exp_{2i}(H):=\min\{n\geq 1\mid m_n\circ(\id\otimes S^{2i}\otimes\cdots\otimes S^{2(n-1)i})\circ\Delta_n=u\circ\varepsilon\}.$$
\end{notation}

\begin{remark}\label{r4.10}
\emph{With this notation,  $\exp(H)$ is exactly $\exp_{-2}(H)$ here. We also remark that the research of ``exponents'' firstly begun in \cite{Kashina 1999} and \cite{Kashina 2000} as $\exp_0(H)$ here, which was later studied in \cite{L-M-S 2006} and so on.}
\end{remark}

For simplicity, we always make following conventions:
\begin{itemize}
  \item $\min \varnothing = \infty$;
  \item Any positive integer divides $\infty$;
  \item Any positive integer divided by $\infty$ is also $\infty$.
\end{itemize}
Then whenever finite and infinite,
\begin{equation}\label{36}
\exp_{2i}(H)=\exp(H) ~\text{for all}~ i\in{\mathbb Z}
\end{equation}
when $H$ is involutory. In fact, Equation \eqref{36} holds as long as $H$ is pivotal, which is directly followed from the lemma below:

\begin{lemma}\emph{(}\cite[Lemma 4.2]{Shimizu 2015}\emph{)}\label{20}
Let $H$ be a Hopf algebra and $g\in H$ is grouplike. Denote $\varphi$ as the inner automorphism on $H$ determined by $g$. Then
$$(hg)^{[n]}=\sum h_{(1)}\varphi(h_{(2)})\cdots\varphi^{n-1}(h_{(n)})g^n$$
holds for each $n\geq 1$ and all $h\in H$.
\end{lemma}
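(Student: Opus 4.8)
\textbf{Proof proposal for Lemma \ref{20}.}

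The plan is to prove the identity $(hg)^{[n]}=\sum h_{(1)}\varphi(h_{(2)})\cdots\varphi^{n-1}(h_{(n)})g^n$ by induction on $n$, where $\varphi(\cdot)=g(\cdot)g^{-1}$. The base case $n=1$ is trivial, since $(hg)^{[1]}=hg$ and the right-hand side is just $h g$. For the inductive step, I would write $(hg)^{[n+1]}=\sum (hg)_{(1)}(hg)_{(2)}\cdots(hg)_{(n+1)}$ and split off the first $n$ tensor legs from the last one. Because $g$ is grouplike, $\Delta(hg)=\sum h_{(1)}g\otimes h_{(2)}g$, and more generally $\Delta_{n+1}(hg)=\sum h_{(1)}g\otimes h_{(2)}g\otimes\cdots\otimes h_{(n+1)}g$. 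Grouping the first $n$ factors and applying the inductive hypothesis to $\sum h_{(1)}g\otimes\cdots\otimes h_{(n)}g$ (which is exactly $\Delta_n$ applied to $hg$ before multiplying), one gets
\[
(hg)^{[n+1]}=\sum \big(h_{(1)}\varphi(h_{(2)})\cdots\varphi^{n-1}(h_{(n)})g^n\big)\,\big(h_{(n+1)}g\big).
\]

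The key computational point is then to push the factor $g^n$ past $h_{(n+1)}$. Since $g^n h_{(n+1)}=\varphi^n(h_{(n+1)})g^n$ by the very definition of $\varphi$ as conjugation by $g$, the last two factors combine as $g^n h_{(n+1)}g=\varphi^n(h_{(n+1)})g^{n+1}$, and substituting this back yields
\[
(hg)^{[n+1]}=\sum h_{(1)}\varphi(h_{(2)})\cdots\varphi^{n-1}(h_{(n)})\varphi^n(h_{(n+1)})g^{n+1},
\]
which is precisely the claimed formula for $n+1$. This closes the induction.

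I do not anticipate a genuine obstacle here: the only things used are coassociativity (to legitimately regroup $\Delta_{n+1}$ as $\Delta_n$ followed by one more comultiplication), the fact that a grouplike element satisfies $\Delta(g)=g\otimes g$ and $\varepsilon(g)=1$ (so that $\Delta_{n}(hg)=\Delta_n(h)(g\otimes\cdots\otimes g)$), and the conjugation relation $g h'=\varphi(h')g$. The mild bookkeeping subtlety is making sure the Sweedler indices line up when invoking the inductive hypothesis — i.e. that the first $n$ comultiplication legs of $hg$ really do produce $\sum (h_{(1)}g)\otimes\cdots\otimes(h_{(n)}g)$ with the remaining leg $h_{(n+1)}g$ — but this is immediate from coassociativity and the grouplike property. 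Hence the proof is a short, routine induction, and the statement follows.
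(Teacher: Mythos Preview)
Your induction argument is correct and complete: the grouplike property gives $\Delta_n(hg)=\sum h_{(1)}g\otimes\cdots\otimes h_{(n)}g$, and then pushing successive powers of $g$ to the right via $g^k x=\varphi^k(x)g^k$ yields the formula. Note that the paper does not actually prove this lemma itself but simply quotes it from \cite[Lemma 4.2]{Shimizu 2015}, so there is no in-paper argument to compare against; your proof is exactly the standard one.
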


Notation \ref{22} helps us to describe the exponent of the semidirect product $H\rtimes \k\langle S^2\rangle$.

\begin{proposition}\label{25}
Let $H$ be a finite-dimensional Hopf algebra. Then
$$\exp(H\rtimes \k\langle S^2\rangle)=\lcm(\exp_{2i}(H)\mid i\in\mathbb Z).$$
\end{proposition}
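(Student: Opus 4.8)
The plan is to unwind the definition of $\exp$ for the semidirect product by computing, for a homogeneous element $h\rtimes S^{2i}\in H\rtimes\k\langle S^2\rangle$, the map $m_n\circ(\id\otimes S_{H\rtimes\k\langle S^2\rangle}{}^{-2}\otimes\cdots\otimes S_{H\rtimes\k\langle S^2\rangle}{}^{-2n+2})\circ\Delta_n$ applied to it, and showing this equals $u\circ\varepsilon$ if and only if $n$ is divisible by $\exp_{2i}(H)$ for every $i$. First I would record that, since $H\rtimes\k\langle S^2\rangle = H\otimes\k\langle S^2\rangle$ as a coalgebra, the comultiplication is $\Delta(h\rtimes S^{2i})=\sum (h_{(1)}\rtimes S^{2i})\otimes(h_{(2)}\rtimes S^{2i})$, so $\Delta_n(h\rtimes S^{2i})=\sum (h_{(1)}\rtimes S^{2i})\otimes\cdots\otimes(h_{(n)}\rtimes S^{2i})$. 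Next I would use the stated antipode formula $S_{H\rtimes\k\langle S^2\rangle}(h\rtimes S^{2i})=S^{-2i+1}(h)\rtimes S^{-2i}$ to compute its even powers: one checks by induction that $S_{H\rtimes\k\langle S^2\rangle}{}^{2m}(h\rtimes S^{2i})=S^{2m}(h)\rtimes S^{2i}$ for all $m\in\Z$ (the $\k\langle S^2\rangle$-component is untouched by an even power, while the $H$-component picks up exactly $S^{2m}$). In particular $S_{H\rtimes\k\langle S^2\rangle}{}^{-2j+2}(h\rtimes S^{2i})=S^{-2j+2}(h)\rtimes S^{2i}$.

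With these two computations in hand, the core step is to multiply out
$$(h_{(1)}\rtimes S^{2i})\bigl(S^{-2}(h_{(2)})\rtimes S^{2i}\bigr)\cdots\bigl(S^{-2n+2}(h_{(n)})\rtimes S^{2i}\bigr)$$
using the multiplication rule $(a\rtimes S^{2p})(b\rtimes S^{2q})=aS^{2p}(b)\rtimes S^{2(p+q)}$. The $\k\langle S^2\rangle$-component accumulates to $S^{2ni}$, and the $H$-component becomes
$$h_{(1)}\,S^{2i}\!\bigl(S^{-2}(h_{(2)})\bigr)\,S^{4i}\!\bigl(S^{-4}(h_{(3)})\bigr)\cdots S^{2(n-1)i}\!\bigl(S^{-2n+2}(h_{(n)})\bigr)
= h_{(1)}\,S^{2(i-1)}(h_{(2)})\,S^{4(i-1)}(h_{(3)})\cdots S^{2(n-1)(i-1)}(h_{(n)}),$$
i.e. $m_n\circ(\id\otimes S^{2(i-1)}\otimes\cdots\otimes S^{2(n-1)(i-1)})\circ\Delta_n$ applied to $h$, tensored with $S^{2ni}$. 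Hence the defining condition of $\exp(H\rtimes\k\langle S^2\rangle)$ forces simultaneously $S^{2ni}=\id$ on $\k\langle S^2\rangle$ for every $i$ occurring — equivalently $n$ is a multiple of $\ord(S^2)$ — and $\exp_{2(i-1)}(H)\mid n$ for every $i\in\Z$; as $i$ ranges over $\Z$ so does $i-1$, so this is just $\lcm(\exp_{2i}(H)\mid i\in\Z)\mid n$. It remains to observe that $\ord(S^2)=\exp_0(H)\mid\exp_{2i}(H)$ for a suitable index (indeed taking $i-1=0$, i.e. $i=1$, already gives $\exp_0(H)\mid n$), so the $\k\langle S^2\rangle$-component condition is absorbed into the $\lcm$ and the two sides agree, including the infinite case by the stated divisibility conventions.

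The main obstacle I anticipate is purely bookkeeping: carefully tracking how the iterated multiplication distributes the powers of $S$ across the tensor factors — in particular getting the shift from $S^{2i}$ composed with $S^{-2j+2}$ to land on $S^{2(j-1)(i-1)}(h_{(j)})$ correctly — and being careful about the edge cases $n=1$ and about whether $0$ is allowed as an index (it is, via $\exp_0(H)$). There is no conceptual difficulty beyond the definitions once the two auxiliary computations (the even powers of $S_{H\rtimes\k\langle S^2\rangle}$ and the telescoping of the $H$-component) are pinned down; Lemma \ref{20} is the clean way to package the latter, applied with $g=1\rtimes S^2$ and $\varphi=S_{H\rtimes\k\langle S^2\rangle}{}^2$, though one can equally do it by a direct induction.
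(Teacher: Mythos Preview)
Your computation is correct and lands on the same formula as the paper, but there is one slip in the final absorption step. You assert that $\ord(S^2)=\exp_0(H)$, and this is false in general: already for a group algebra $\k G$ one has $S^2=\id$ so $\ord(S^2)=1$, while $\exp_0(\k G)$ is the exponent of $G$. What you actually need is that $\ord(S^2)$ divides one of the terms in the $\lcm$, and the correct fact to invoke is $\ord(S^2)\mid\exp(H)=\exp_{-2}(H)$ (this is Corollary~\ref{31} in the paper). Since $\exp_{-2}(H)$ appears among the $\exp_{2(i-1)}(H)$ (take $i=0$), the condition $\ord(S^2)\mid n$ is indeed absorbed into the $\lcm$, and your argument then closes.

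Apart from this fix, your route differs from the paper's only cosmetically. The paper first uses the pivotality of $H\rtimes\k\langle S^2\rangle$ (via Lemma~\ref{20}, Equation~\eqref{36}) to replace $\exp$ by $\exp_0$, so that it computes the untwisted Sweedler power $(h\rtimes S^{2i})^{[n]}$ directly and reads off the $H$-component as $\sum h_{(1)}S^{2i}(h_{(2)})\cdots S^{2(n-1)i}(h_{(n)})$, yielding $\exp_{2i}(H)$ rather than your $\exp_{2(i-1)}(H)$. You instead carry the antipode twists through the computation by hand, which costs you the extra step of computing $S_{H\rtimes\k\langle S^2\rangle}^{2m}$ and produces the harmless index shift $i\mapsto i-1$. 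Both approaches are equivalent; the paper's is slightly cleaner because the pivotality observation front-loads the work, while yours is more self-contained since it avoids appealing to Equation~\eqref{36}.
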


\begin{proof}
Note that we have $\exp(H\rtimes \k\langle S^2\rangle)=\exp_0(H\rtimes \k\langle S^2\rangle)$ by Equation \eqref{36}. Now for any positive integer $n$ and each $h\in H$, $i\in\mathbb Z$, we calculate that
$$(h\rtimes S^{2i})^{[n]}=\sum h_{(1)}S^{2i}(h_{(2)})\cdots S^{2(n-1)i}(h_{(n)})\rtimes S^{2ni}.$$
Therefore, the $n$th Sweedler power $[n]_{H\rtimes \k\langle S^2\rangle}$ on $H\rtimes \k\langle S^2\rangle$ is trivial if and only if $S^{2ni}=\id$ and
$$m_n\circ(\id\otimes S^{2i}\otimes\cdots\otimes S^{2(n-1)i})\circ\Delta_n=u\circ\varepsilon$$
both hold for all $i\in\mathbb Z$. In other words,
$$
[n]_{H\rtimes \k\langle S^2\rangle} ~\text{is trivial} ~\Longleftrightarrow~
\lcm\left(\ord(S^{2i}),\exp_{2i}(H)\mid i\in\mathbb Z\right)\mid n.
$$
However, we know that $\ord(S^2)\mid \exp_{-2}(H)$ by Corollary \ref{31}. As a conclusion, $\exp(H\rtimes \k\langle S^2\rangle)=\lcm(\exp_{2i}(H)\mid i\in\mathbb Z)$ is obtained.
\end{proof}

We end up by describing the quasi-exponent of $H\rtimes \mathbb C\langle S^2\rangle$ when $H$ has the dual Chevalley property over $\mathbb C$. In this case $H\rtimes \mathbb C\langle S^2\rangle$ also has the dual Chevalley property with the coradical $H_0\rtimes \mathbb C\langle S^2\rangle$.

\begin{proposition}
Let $H$ be a finite-dimensional Hopf algebra with the dual Chevalley property over $\mathbb C$. Then
$$\qexp(H\rtimes \mathbb C\langle S^2\rangle)=\exp(H_0)=\qexp(H).$$
\end{proposition}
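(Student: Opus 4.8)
The plan is to reduce the statement for $H \rtimes \mathbb{C}\langle S^2\rangle$ to the already-established Proposition~\ref{15} by showing that $H \rtimes \mathbb{C}\langle S^2\rangle$ again has the dual Chevalley property and by computing its coradical. First I would verify the parenthetical claim preceding the statement: namely that $(H \rtimes \mathbb{C}\langle S^2\rangle)_0 = H_0 \rtimes \mathbb{C}\langle S^2\rangle$. Since $H \rtimes \mathbb{C}\langle S^2\rangle = H \otimes \mathbb{C}\langle S^2\rangle$ as a coalgebra, and $\mathbb{C}\langle S^2\rangle$ is a group algebra (hence cosemisimple, so its coradical is itself), the coradical of the tensor coalgebra is $H_0 \otimes \mathbb{C}\langle S^2\rangle$; this uses that the coradical of a tensor product of coalgebras is the tensor product of the coradicals (Larson, as cited in the proof of Lemma~\ref{32}). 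Next one checks that $H_0 \rtimes \mathbb{C}\langle S^2\rangle$ is closed under the smash-product multiplication: this follows because $S^2(H_0) = H_0$ (the antipode, hence $S^2$, preserves the coradical), so the defining formula $(h \rtimes S^{2i})(k \rtimes S^{2j}) = h S^{2i}(k) \rtimes S^{2(i+j)}$ keeps us inside $H_0 \rtimes \mathbb{C}\langle S^2\rangle$. Therefore $H \rtimes \mathbb{C}\langle S^2\rangle$ has the dual Chevalley property with coradical $H_0 \rtimes \mathbb{C}\langle S^2\rangle$.

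Once this is in place, Proposition~\ref{15} applies directly to the finite-dimensional Hopf algebra $H \rtimes \mathbb{C}\langle S^2\rangle$ over $\mathbb{C}$, giving
$$\qexp(H \rtimes \mathbb{C}\langle S^2\rangle) = \exp\left((H \rtimes \mathbb{C}\langle S^2\rangle)_0\right) = \exp\left(H_0 \rtimes \mathbb{C}\langle S^2\rangle\right).$$
So the task reduces to showing $\exp(H_0 \rtimes \mathbb{C}\langle S^2\rangle) = \exp(H_0)$. Here I would invoke Proposition~\ref{25} applied to the semisimple (hence cosemisimple, by \cite{L-R 1988}) Hopf algebra $H_0$ in place of $H$: note $\langle S_{H_0}^2 \rangle = \langle S^2\vert_{H_0}\rangle$ sits inside $\langle S^2\rangle$, but more to the point $H_0 \rtimes \mathbb{C}\langle S^2\rangle$ — where $\langle S^2 \rangle$ here means the image of $S^2$ acting on $H$ — acts on $H_0$ through $S^2\vert_{H_0}$, and Proposition~\ref{25} (or its evident variant) yields $\exp(H_0 \rtimes \mathbb{C}\langle S^2\rangle) = \lcm(\exp_{2i}(H_0) \mid i \in \mathbb{Z})$.

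Finally I would show each $\exp_{2i}(H_0)$ divides $\exp(H_0)$, so the least common multiple collapses to $\exp(H_0)$. The key input is that $H_0$ is semisimple and cosemisimple over $\mathbb{C}$, hence involutory: $S^2\vert_{H_0} = \id$ by Etingof--Gelaki \cite{E-G 1998}. With $S^2 = \id$ on $H_0$, every twisted Sweedler power reduces to the ordinary one, so $\exp_{2i}(H_0) = \exp_0(H_0)$ for all $i$; and since $H_0$ is pivotal (trivially, being involutory), Equation~\eqref{36} gives $\exp_0(H_0) = \exp(H_0)$. Hence $\exp(H_0 \rtimes \mathbb{C}\langle S^2\rangle) = \exp(H_0)$, and combining with the displayed equality above and with Proposition~\ref{15} applied to $H$ itself ($\qexp(H) = \exp(H_0)$) completes the proof. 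The main obstacle, such as it is, is bookkeeping: making sure that ``$\langle S^2 \rangle$'' means the same finite cyclic group throughout — it is the cyclic group generated by $S^2 \in \End_\mathbb{C}(H)$, whose restriction to $H_0$ is trivial but which still acts nontrivially on $H$ — so that Proposition~\ref{25} is being applied to the correct semidirect product; once the conventions are pinned down the rest is immediate from the cited results.
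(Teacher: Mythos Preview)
Your proposal is correct and follows essentially the same route as the paper's proof: apply Proposition~\ref{15} to $H\rtimes\mathbb C\langle S^2\rangle$ (after identifying its coradical as $H_0\rtimes\mathbb C\langle S^2\rangle$), then use Proposition~\ref{25} together with the involutory property of $H_0$ over $\mathbb C$ to collapse $\lcm(\exp_{2i}(H_0)\mid i\in\mathbb Z)$ to $\exp(H_0)$. Your write-up is in fact more careful than the paper's terse argument on one point: you flag that $\langle S^2\rangle$ denotes the cyclic group generated by $S^2$ on $H$ (not on $H_0$), so that Proposition~\ref{25} is being used in a mild variant---the extra $\ord(S^2)$ factor that appears in its proof is then absorbed by Corollary~\ref{11}(1), which gives $\ord(S^2)\mid\exp(H_0)$.
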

\begin{proof} By Proposition \ref{15}, we know that $\exp(H_0)=\qexp(H)$, and $\qexp(H\rtimes \mathbb C\langle S^2\rangle)=\exp(H_0\rtimes \mathbb C\langle S^2\rangle)$ which equals to $\lcm(\exp_{2i}(H_0)\mid i\in\mathbb Z)$ by Proposition \ref{25}. Since $H_0$ is semisimple over $\mathbb C$, $(S\mid_{H_0})^2=\id_{H_0}$. This implies that
 $\lcm(\exp_{2i}(H_0)\mid i\in\mathbb Z)=\exp(H_0)$.
\end{proof}

%
%

\end{document}